\newtheorem{satz}{Proposition}[section]
\newtheorem{lem}[satz]{Lemma} 
\newtheorem{remark}[satz]{Remark}
\newtheorem{thm}[satz]{Theorem}
\newtheorem{cor}[satz]{Corollary}
\newtheorem{prop}[satz]{Proposition} 
\definecolor{gray}{gray}{0.50}
\definecolor{lred}{rgb}{1.0,0.5,0.5}
\definecolor{dgreen}{rgb}{0,1,1}
\numberwithin{equation}{section}
\newcommand{\chookrightarrow}{\mathrel{\lhook\joinrel\relbar\kern-.8ex\joinrel\lhook\joinrel\rightarrow}}
\newcommand{\R}{\mathbb{R}}
\newcommand{\N}{\mathbb{N}}		   
\newcommand{\Z}{\mathbb{Z}}
\newcommand{\dd}{\mathrm{d}}
\newcommand{\e}{\varepsilon}
\DeclareMathOperator{\Ima}{Im}
\DeclareMathOperator{\sech}{sech}
\definecolor{luh-dark-blue}{rgb}{0.0, 0.313, 0.608}
\definecolor{lred}{rgb}{1.0,0.5,0.5}
\begin{document}
\title[Symmetry and decay of traveling waves]{Symmetry and decay of traveling wave solutions to the Whitham equation}
\author{Gabriele Bruell}
\author{Mats Ehrnstr\"om}
\author{Long Pei}
\address{Department of Mathematical Sciences, Norwegian University of Science and Technology, 7491 Trondheim, Norway.}
\email{gabriele.bruell@math.ntnu.no}
\address{Department of Mathematical Sciences, Norwegian University of Science and Technology, 7491 Trondheim, Norway.}
\email{mats.ehrnstrom@math.ntnu.no}
\address{Department of Mathematical Sciences, Norwegian University of Science and Technology, 7491 Trondheim, Norway.}
\email{long.pei@math.ntnu.no}

\subjclass[2010]{35Q53, 35B06, 35B40, 35S30, 45K05}
\keywords{Nonlocal equation; solitary solutions; symmetry; exponential decay.}

\begin{abstract}
This paper is concerned with decay and symmetry properties of solitary-wave solutions to a nonlocal shallow-water wave model. An exponential decay result for supercritical solitary-wave solutions is given. Moreover, it is shown that all such solitary-wave solutions are symmetric and monotone on either side of the crest. The proof is based on the method of moving planes. Furthermore, a close relation between symmetric and traveling-wave solutions is established.
\end{abstract}
\maketitle

\section{Introduction}

The dynamics of water waves for an inviscid perfect fluid are described by the Euler equations, complemented with suitable boundary conditions. Due to the intricate character of this system, a rigorous mathematical study of its solutions is challenging and it is one aim in the analysis of water waves to derive model equations which capture as many as possible of the phenomena displayed by water waves. In the context of irrotational, small-amplitude, shallow-water waves, it is well-known that the \emph{Korteweg--de Vries equation} (KdV),
\begin{equation}
\label{KdV}
\eta_t + \frac{3}{2}\frac{c_0}{h_0}\eta\eta_x +c_0\eta_x+\frac{1}{6} c_0h_0^2 \eta_{xxx}=0,
\end{equation}
can be rigorously deduced as a consistent approximation to the Euler equations \cite{Lannes}. Here, $\eta(t,x)$ describes the surface displacement from an undisturbed flow over a flat bottom at time $t\in[0,\infty)$ and spatial position $x\in \R$. The constant $c_0:=\sqrt{gh_0}$ is the limiting long-wave speed, $h_0$ is the undisturbed fluid depth and $g$ denotes the gravitational constant of acceleration. Equation \eqref{KdV} may be equivalently expressed in nonlocal form as
\[
\eta_t +\frac{3}{2}\frac{c_0}{h_0}\eta\eta_x +\mathcal{F}^{-1}\left(c(\xi)\right)*\eta_x=0,
\]
where $\mathcal{F}^{-1}$ denotes the inverse (spatial) Fourier transform, and 
\begin{equation*}
\label{dispersion KdV}
c(\xi):= c_0-\frac{1}{6}c_0h_0^2\xi^2
\end{equation*}
is the dispersion relation of the KdV equation. Noticing that $c$ is a second-order approximation of the exact dispersion relation of the linearized Euler equations,
\[
m_{h_0}(\xi):=\left(\frac{g\tanh(\xi h_0)}{\xi}\right)^{\frac{1}{2}}= c_0-\frac{1}{6}c_0h_0^2\xi^2 + O(\xi^4),
\]
G.~B. Whitham \cite{W67} suggested what is today termed the \emph{Whitham equation},
\begin{equation}
\label{dimW}
\eta_t + \frac{3}{2}\frac{c_0}{h_0}\eta\eta_x +\mathcal{F}^{-1}\left(m_{h_0}\right)*\eta_x=0,
\end{equation}
as an alternative to the KdV equation. Here, $K_{h_0}:=\mathcal{F}^{-1}\left(m_{h_0}\right)$ is the integral kernel corresponding to a (genuinely) nonlocal Fourier multiplier operator with symbol \(m_{h_0}\). This approach of \emph{dispersion improving} is often applied to improve the modeling aspects of fluid dynamics equations \cite{Lannes}, as it weakens the role of dispersion towards that of the full Euler equations. Equation \eqref{dimW} can also be obtained directly from the Euler equations via an exponential scaling \cite{Kalisch}. From a consistency point of view, the equation \eqref{dimW} is neither a better nor a worse model than the KdV equation: their solutions both approximate shallow-water, small-amplitude gravity water-wave solutions of the Euler equations to the same order on appropriate time scales \cite{Lannes}. As described below, the Whitham equation \eqref{dimW} however has the property of capturing several of the mathematical \emph{features} of the Euler equations, that the KdV equation does not (including nonlocality, break-down of solutions, modulational instability and highest waves).  

\medskip

The purpose of the present paper is to analyze geometric properties of solitary-wave solutions to the Whitham equation. We will show that the Whitham equation captures various characteristics of solitary solutions to the Euler equations. In the same physical setting as ours, it was shown in \cite{Craig} that any irrotational solitary gravity wave with supercritical\footnote{A wave speed exceeding \(\sqrt{gh_0}\).} wave speed is positive (a wave of elevation) and symmetric with one wave crest from which the surface decreases monotonically.  We confirm these properties for the  Whitham equation\footnote{The positivity of supercritical solutions was established in \cite{EW}.}.
Furthermore, we
address the relation between a priori symmetry and steadiness of solutions of \eqref{dimW}. As first established in \cite{EHR}, for the Euler equations as well as for a range of dispersive model equations, a priori symmetry of (time-dependent) solutions implies their being steady solutions. It turns out that this property is preserved by the Whitham equation, despite the principle in \cite{EHR} being a local one (and the Whitham equation being inherently nonlocal).

\medskip

A few words on the Whitham equation. It is straightforward \cite{EEP} to prove that \eqref{dimW} is locally well-posed in classical energy spaces $H^{\frac{3}{2}+}$, for both localized and periodic initial data, although the data-to-solution map is not uniformly continuous \cite{Arn16}. Small KdV-like solitary waves exist as constrained minimizers of a natural Hamiltonian \cite{EGW}.  Small and large periodic traveling waves connect to a global analytic curve \cite{EK}, which contains at its end a highest, cusped, wave \cite{EW} (such shapes appear also in the water wave problem, see \cite{C01Edge, H08}). The periodic waves exhibit modulation instability, as confirmed both numerically \cite{CKKS} and analytically \cite{HJ15}. The Whitham equation also allows for finite-time wave breaking in the sense of bounded surface profiles with unbounded slopes  \cite{NS, CE, Hur}. Finally, both numerical data and wave-channel experiments indicate modeling advantages of the Whitham equation when compared to the KdV \cite{BKN, TKCO16}, the Saint-Venant and the Serre equations \cite{CG}, when either short or large waves are considered.
\medskip

Our paper is outlined as follows. While Section \ref{preliminaries} only contains some short preliminaries, Section \ref{decay} is devoted to the decay of supercritical solitary-wave solutions of the Whitham equation. Inspired by the classical paper \cite{bona1997decay} on decay of solitary waves by Bona and Li, we prove that any such solution decays exponentially fast. 
In contrast, Sections \ref{TS} and \ref{ST} are concerned with the relation between traveling and symmetric wave solutions to the Whitham equation. The main result in Section \ref{TS} states that any supercritical solitary wave tending to zero at infinity is symmetric with exactly one crest from which the surface is decreasing. This result is proved by applying (a very weak form of) the \emph{method of moving planes}, which goes back to Aleksandrov \cite{Alk} and Serrin \cite{Serr} in 1962 and 1971, respectively.
 While our method is most closely related to the work by Chen, Li and Ou \cite{CLO}, and our setting to that of Craig \cite{Craig} on irrotational solitary gravity water waves, we draw some inspiration also from \cite{Hs} and \cite{CEW}. Since the method of moving planes relies upon the maximum principle, we formulate a \emph{touching lemma} for the nonlocal form of our equation, comparable to the strong maximum principle for elliptic equations.   
 In Section \ref{ST} we turn to the time-dependent Whitham equation and establish that any classical, symmetric solution, which is unique with respect to initial data is a traveling-wave solution. Although the proof has been modified to fit the nonlocal character of the Whitham equation, the result is inspired by a principle first developed in \cite{EHR}, and later used for example in \cite{K1, addAnna} (a more general approach towards such principles is in preparation \cite{BEGP}).

\section{Preliminaries}
\label{preliminaries}
To begin with, let us reformulate the Whitham equation in a normalized form as
\begin{equation}
\label{Wc}
u_t +2uu_x + K*u_x=0,
\end{equation}
where
\[
K = \mathcal{F}^{-1}\left( m \right) \qquad \mbox{and} \qquad  m(\xi) = \left(\frac{\tanh{\xi}}{\xi}\right)^\frac{1}{2}.
\]
The function $m$ is the symbol for the Fourier multiplier operator associated with the kernel $K$. 
We normalize the Fourier transform $\mathcal{F}$ of a function $f\in L_1(\R)$ to be
\[
\mathcal{F}(f)(\xi) = \int_\R f(x)e^{-ix\xi}\, dx
\]
so that the inverse Fourier transform takes the form $\mathcal{F}^{-1}(f)(\xi)= \frac{1}{2\pi}\mathcal{F}(f)(-\xi)$. 
Note that the Fourier transform on the space of Schwartz functions can be generalized by duality to a continuous isomorphism $\mathcal{F}: \mathcal{S}^\prime(\R) \to \mathcal{S}^\prime(\R)$ on the space of tempered distributions on $\R$. 
The Fourier multiplier $m$, which represents the phase speed of the linearized Euler equations, is smooth, even, and strictly decreasing on $(0,\infty)$ with $m(|\xi|)\eqsim |\xi|^{-\frac{1}{2}}$ for $|\xi|\geq 1$, attaining its maximum at $m(0)=1$. As an immediate consequence we deduce that the kernel $K$ belongs to $L_1(\R)$, is even and singular at the origin. Moreover, the analysis in \cite{EW} confirms that $K$ is positive, smooth away from the origin and completely monotone on $(0,\infty)$.

\medskip

Addressing traveling-wave solutions to \eqref{Wc}, the usual ansatz $u(x,t)= \phi(x-ct)$, where $c>0$ denotes the speed of a right-propagating wave, allows the Whitham equation to take the form
\[-c \phi_x + 2\phi\phi_x+K*\phi_x=0.\]
Integrating the above equation yields
\begin{equation}\label{steadyWhitham}
-c \phi+\phi^2 +K*\phi = B,
\end{equation}
where $B\in\R$ is an integration constant. By the Galilean change of variables
\[\phi \mapsto \phi+\gamma,\qquad c\mapsto c+2\gamma,\qquad B\mapsto B+\gamma(1-c-\gamma),\]
 we may without loss of generality consider $B=0$ in  \eqref{steadyWhitham}. 
  This choice corresponds to a solution with possibly different speed and elevation, but the form of solutions remains intact. Thus, we are left with investigating
 \begin{equation}\label{DsteadyWhitham}
-c \phi+\phi^2 +K*\phi = 0.
\end{equation}
\emph{
Throughout this paper we mean by a solution to the steady Whitham equation a real-valued, continuous, and bounded function that satisfies \eqref{DsteadyWhitham} pointwise.} Also, $ \lesssim $ and $\gtrsim$ denote that the inequality holds true up to multiplication by a positive constant. If in addition the constant depends on a parameter $p$, we write $\lesssim_{p}$ and $ \gtrsim_{p}$, respectively. Similarly, the shorthand notation $ \eqsim $ is used if both $\lesssim $ and $ \gtrsim $ hold, and $ \eqsim_{p}$ is defined accordingly.
Sometimes the notation $C=C(\cdot,\cdot,\cdots)$ is used to emphasize the dependence of a positive constant $C$ on particular parameters. 
\bigskip

\section{Decay of solitary solutions}
\label{decay}

In this section we examine the rate of decay of solitary solutions to \eqref{DsteadyWhitham}.  \emph{It is assumed that the waves are supercritical, meaning that the normalized wave speed satisfies \(c > 1\).} This is a natural assumption for gravity water waves, and in line with the current existence theory for solitary waves for the Whitham equation \cite{EGW}.
We prove that any solitary solution tending to zero at infinity decays exponentially fast. 
This is achieved by rewriting the steady Whitham equation in the form
\begin{equation}\label{convolution equation}
\phi\left(c-\phi\right)=H_c*\phi^2,
\end{equation}
where $H_c=\mathcal{F}^{-1}(\frac{m}{c-m})$, and investigating the integral kernel $H_c$. Decay properties of equations having the form 
\begin{equation*}\label{BL eq}
\phi=H*G(\phi)
\end{equation*}
are rigorously studied in the classical paper \cite{bona1997decay} by Bona and Li. In \cite{bona1997decay} the authors prove that any bounded solution $\phi$, tending to zero at infinity, decays at a rate which depends on the decay properties of the integral kernel $H$, provided $G$ satisfies a certain growth condition. 
More precisely, it is shown that if there exists $\sigma>0$ such that $e^{\sigma |\cdot|}H \in L_2(\R)$, then $\phi$ decays exponentially. Let us denote the symbol of the integral kernel by $h:=\mathcal{F}(H)$.
In view of Paley--Wiener theory, the condition on $H$ guaranteeing exponential decay of $\phi$ requires $h$ to be analytic in a horizontal complex strip enclosing the real axis.
It is then reasonable to expect that in general the lack of smoothness of the symbol $h$ yields a loss of the exponential decay property. As for the Benjamin--Ono equation, having symbol $h(\xi)=\frac{1}{1+|\xi|}$, it is known that the only solutions on $\R$ which tend to zero at infinity have quadratic decay, see \cite{AT}. A generalized Benjamin--Ono equation is studied in \cite{MM} and an algebraic decay result for solitary solutions is presented employing the regularity and (algebraic) decay of the associated kernel.
In \cite{BS} a qualitatively similar result is shown for steady solutions of the generalized Kadometsev--Petviashvili equation, whose symbol exhibits finite smoothness, too. Moreover, in \cite{BS} the authors confirm the optimality of decay. Further contributions relating finite smoothness of the symbol to algebraic decay can be found for instance in \cite{CGR,CN}.
The steady Whitham equation \eqref{convolution equation} satisfies the growth condition claimed in \cite{bona1997decay}. 
However, it can be easily seen that the kernel $H_c$  does not belong to $L_2(\R)$\footnote{its Fourier transform, given by $\frac{m}{c-m}$, is not bounded in $L_2(\R)$.}. Hence, there is no chance that $e^{\sigma |\cdot|}H_c \in L_2(\R)$ for any $\sigma>0$.
Though \eqref{convolution equation} does not fall into the frame of \cite{bona1997decay}, we prove that the kernel $H_c$ decays exponentially fast. Using then similar arguments, it is shown that any supercritical solitary  solution tending to zero at infinity decays with at least the same rate as the kernel $H_c$.

\medskip

\subsection{The kernel \(H_c\)}
It is clear that the kernel $H_c$ is even and singular at the origin. 
 In the following we shall prove that \(H_c(x) \eqsim |x|^{-\frac{1}{2}}\) for \(|x| \leq 1\) and that it decays exponentially fast, the rate of decay being (increasingly) dependent on $c$. Consequently, $|\cdot|^{\alpha}H_c(\cdot)\in L_{p}(\mathbb{R})$ for $p\in[1,\infty]$ if $\alpha >\frac{1}{2}-\frac{1}{p}$.
  Furthermore, the kernel \(H_c\) is shown to be positive and monotonically decreasing on the positive half-line.

\begin{lem}\label{lemma:analyticity}
Let \(\delta \in (0,\frac{\pi}{2})\). Then $m$ is analytic in the strip $|\Ima z|\leq \delta$. There, one has
\(
|m(z)| \leq \sqrt{\frac{\tan{\delta}}{\delta}}
\)
and $\sup_{|y| \leq \delta} \|m^{\prime}(\cdot + iy)\|_{L_2(\R)} \lesssim 1$.
\end{lem}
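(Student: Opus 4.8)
The plan is to prove the three claims about $m(z)=\bigl(\tfrac{\tanh z}{z}\bigr)^{1/2}$ in turn, starting from the analyticity statement, since the bounds presuppose that $m$ is a well-defined holomorphic function on the strip $|\Ima z|\leq\delta$. First I would recall that $\tanh z = \sinh z/\cosh z$ has poles exactly where $\cosh z=0$, i.e.\ at $z=i\pi(k+\tfrac12)$ for $k\in\Z$, and zeros at $z=ik\pi$. The nearest singularity to the real axis sits at height $\pi/2$, so restricting to $\delta<\pi/2$ keeps us strictly below it. The subtle point is the branch of the square root and the apparent singularity of $\tanh z/z$ at $z=0$: since $\tanh z/z = 1 + O(z^2)$ near the origin and equals $1$ at $z=0$, the quotient extends holomorphically across $0$ with value $1$, and I would verify that $\tanh z/z$ does not vanish and stays off the negative real axis on the strip, so that the principal branch of $(\cdot)^{1/2}$ composes with it to give an analytic $m$. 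This non-vanishing/argument control on the strip is the first thing I would pin down carefully.

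For the pointwise bound $|m(z)|\leq\sqrt{\tan\delta/\delta}$, I would aim to show $\bigl|\tfrac{\tanh z}{z}\bigr|\leq \tfrac{\tan\delta}{\delta}$ on the closed strip and then take square roots. The natural approach is the maximum principle: the function $\tanh z/z$ is holomorphic and bounded on the strip (it decays as $|\Rey z|\to\infty$ since $\tanh z\to\pm1$ while $|z|\to\infty$), so by a Phragmén--Lindelöf / maximum-modulus argument its supremum is attained on the boundary lines $\Ima z=\pm\delta$. On such a line, writing $z=x\pm i\delta$, I expect the modulus to be largest at $x=0$, giving $|\tanh(\pm i\delta)/(\pm i\delta)| = |\tan\delta|/\delta = \tan\delta/\delta$ (using $\tanh(i\delta)=i\tan\delta$). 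The work here is to justify that the boundary maximum really occurs at $x=0$, which I would do by showing $x\mapsto |\tanh(x+i\delta)|^2/(x^2+\delta^2)$ is decreasing in $|x|$, expanding $|\tanh(x+i\delta)|^2 = \tfrac{\sinh^2 x + \sin^2\delta}{\cosh^2 x - \sin^2\delta}$ and comparing against $x^2+\delta^2$.

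For the derivative bound $\sup_{|y|\leq\delta}\|m'(\cdot+iy)\|_{L_2(\R)}\lesssim 1$, I would compute $m'$ via the chain rule, $m' = \tfrac12 m^{-1}\cdot\bigl(\tfrac{\tanh z}{z}\bigr)'$, and use the already-established lower positive bound on $|m|$ (equivalently an upper bound on $|m^{-1}|$) away from where it could blow up. The $L_2$ integrability in $x$ along each horizontal line then reduces to the decay of $\bigl(\tfrac{\tanh z}{z}\bigr)'$ as $|x|\to\infty$: since $\tanh(x+iy)\to\pm1$ exponentially and its derivative $\sech^2$ decays exponentially, the dominant tail of $(\tanh z/z)'$ behaves like $-\tanh z/z^2 \sim \mp 1/x^2$, which is square-integrable at infinity, while near $x=0$ the integrand is bounded uniformly in $y$. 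The uniformity in $y\in[-\delta,\delta]$ follows from compactness of the interval together with the uniform bounds on the strip.

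I expect the main obstacle to be the boundary-maximum step for the pointwise estimate, namely establishing rigorously that the supremum of $|\tanh z/z|$ over the strip is attained at $z=\pm i\delta$. Two things must be handled: justifying the maximum principle despite the removable singularity at the origin (so that the maximum is genuinely on the boundary lines and not at an interior point), and then the monotonicity-in-$x$ computation that locates the boundary maximum at $x=0$. The derivative bound is comparatively routine once the analyticity and the lower bound on $|m|$ are in hand, but I would be careful that $|m|$ stays bounded away from $0$ uniformly on the strip so that $m^{-1}$ does not spoil the $L_2$ estimate.
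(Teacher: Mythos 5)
Your analyticity step and your maximum-principle route to the pointwise bound are sound, and the latter is a genuine alternative to the paper's argument: the paper simply writes out
\[
|m(z)|^4=\frac{e^{2x}+e^{-2x}-2\cos 2y}{\left(e^{2x}+e^{-2x}+2\cos 2y\right)\,(x^2+y^2)}
\]
and asserts that the maximum over the strip is attained at $z=i\delta$, while you reduce to the boundary lines $\Ima z=\pm\delta$ by the maximum modulus principle (legitimate, since $\tanh z/z$ extends holomorphically across $0$ and tends to $0$ as $|\Rey z|\to\infty$ uniformly in the strip) and then locate the boundary maximum at $x=0$. That boundary step does work: the required inequality $\frac{\sinh^2x+\sin^2\delta}{(\cosh^2x-\sin^2\delta)(x^2+\delta^2)}\leq\frac{\tan^2\delta}{\delta^2}$ reduces, using $\tanh^2x\leq x^2$ and $\cosh^2x-\sin^2\delta\geq\cosh^2x\,\cos^2\delta$, to $(2\delta)^2\cos 2\delta\leq\sin^2 2\delta$, which holds for $2\delta\in(0,\pi)$. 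So for this part your proposal supplies precisely the justification the paper leaves implicit.

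The genuine flaw is in the $L_2$ bound for $m'$. The hypothesis you say you would verify --- that $|m|$ stays bounded away from $0$ uniformly on the strip --- is false: since $|\tanh z|\to 1$ as $|\Rey z|\to\infty$ uniformly for $|\Ima z|\leq\delta$, one has $|m(z)|=\left|\tanh z/z\right|^{1/2}\eqsim |\Rey z|^{-1/2}$ for $|\Rey z|$ large, so $m^{-1}$ is unbounded on every horizontal line of the strip, and the step ``use the lower positive bound on $|m|$'' fails as written. The repair is quantitative rather than uniform: $|m(z)|^{-1}\lesssim(1+|\Rey z|)^{1/2}$ on the strip, and combined with your (correct) tail estimate $|(\tanh z/z)'|\lesssim(1+|x|)^{-2}$ (the $\sech^2z/z$ term decays exponentially), the chain rule $m'=\tfrac12 m^{-1}(\tanh z/z)'$ gives $|m'(x+iy)|\lesssim(1+|x|)^{-3/2}$ uniformly for $|y|\leq\delta$, which is square-integrable and matches the paper's estimate $|m'(x+iy)|^{2}\lesssim x^{-3}$. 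So your conclusion survives, but only after replacing the false uniform lower bound by the correct growth rate of $m^{-1}$; the paper sidesteps this issue entirely by computing $|m'(z)|^{2}$ in closed form and reading off the decay directly.
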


\begin{proof}
The function $m^2$ is holomorphic outside of $i \pi  (\frac{1}{2} + \Z)$. 
In addition one has that $m^2(z)=0$ only when $z\in i\pi \Z $, so we may take the square root and obtain that $m$ is holomorphic in the strip \(  |\Ima z|<\frac{\pi}{2}\). Fix $\delta \in (0,\frac{\pi}{2})$. For $z=x+iy$, we have that
\begin{align*}
|m(z)|^4 &= \frac{|\tanh(z)|^2}{|z|^2}= \frac{|e^z-e^{-z}|^2}{|e^z+e^{-z}|^2|z|^2} =\frac{e^{2x}+e^{-2x}-2\cos(2y)}{\left(e^{2x}+e^{-2x}+2\cos(2y) \right)(x^2+y^2)}.
\end{align*}
 This expression is uniformly bounded for $|y|\leq \delta$, where it takes its maximum at $z=i\delta$. Thus,
\(
|m(z)| \leq |m(i\delta)| = \sqrt{\frac{\tan{\delta}}{\delta}} 
\)
for  \(|\Ima z|\leq \delta\).
Note that the derivative of $m$ is odd, whence \(m^\prime(0) = 0\),  and one has 
\[
m^\prime (x) = \frac{x\sech^2{x} -\tanh{x}}{2 x^\frac{3}{2} \sqrt{\tanh{x}}}, \qquad x > 0.
\]
Since \(\tanh(x) \to 1\) as \(x \to \infty\) and \(\sech(x) \lesssim e^{-|x|}\), it follows that \(m^\prime \in L_2(\R)\). With \(z = x + iy\), one furthermore calculates
\begin{align*}
|m^\prime(z)|^{2} &= \frac{|4z - (e^{2z}-e^{-2z})|^2}{|z|^3|e^{2z}-e^{-2z}||e^z+e^{-z}|^2}\\
&= \frac{\cosh{4x} -\cos{4y}+8\left( x^{2}+y^{2} +  x  \sinh{2x}\cos{2y}+ y  \sin{2y} \cosh{2x}\right)}{2 (x^{2}+y^{2})^{\frac{3}{2}}\big( \cosh{2x} +\cos{2y}\big)\big( \cosh{4x} -\cos{4y}\big)^{\frac{1}{2}}}. 
\end{align*}
For $(x,y) \in [-M,M] \times [-\delta, \delta]$ we know that this expression is uniformly bounded. Fix \(M \gg 1\) such that \(\cosh(x) \geq \sinh(x) \gg 1\). Then for \(x > M\) one has
\begin{equation}\label{decay of l(z)}
|m^\prime(x + iy)|^{2}  \lesssim x^{-3},
\end{equation}
uniformly for \(|y| \leq \delta\). Thus \(\{m^\prime(\cdot + iy)\}_{|y| \leq \delta}\) is  bounded in \(L_2(\R)\).
\end{proof}

Now, let
\begin{equation}\label{g}
g(x) = \left(\frac{m(x)}{c-m(x)}\right)^{\prime},
\end{equation}
so that $xH_c(x)=-i\mathcal{F}^{-1}(g)(x)$. Instead of working directly with the kernel \(H_c\), we show that  $x\mapsto e^{\delta_c|x|}xH_c(x)$ belongs to $L_2(\R)$ for some constant $\delta_c>0$ depending on $c$. Here, we apply Paley--Wiener theory to the function \(g\). In the following lemma, the constant $\delta_c $ can be chosen to be increasing in $c$. As \(c \to 1\) from above, one necessarily has \(\delta_c \to 0\).

\begin{lem}\label{Kexp}  
For any given $c>1$ there exists $\delta_c\in (0,\frac{\pi}{2})$ such that
\[
x\mapsto e^{\delta_c|x|}xH_c(x)\quad \mbox{belongs to} \quad L_2(\R).
\]
\end{lem}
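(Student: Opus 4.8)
The plan is to apply Paley--Wiener theory to the function $g$ defined in \eqref{g}, exploiting that $xH_c(x)=-i\mathcal{F}^{-1}(g)(x)$. Recall the exponential-decay form of the theorem: if $g\in L_2(\R)$ admits a holomorphic extension to a strip $|\Ima z|<\delta$ with $\sup_{|y|<\delta}\|g(\cdot+iy)\|_{L_2(\R)}<\infty$, then $e^{\delta|\cdot|}\mathcal{F}(g)\in L_2(\R)$, and hence, after reflection, also $e^{\delta|\cdot|}\mathcal{F}^{-1}(g)\in L_2(\R)$. Thus it suffices to produce a strip on which $g$ is holomorphic and uniformly square-integrable along horizontal lines. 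A direct differentiation gives the convenient form
\[
g=\left(\frac{m}{c-m}\right)'=\frac{c\,m'}{(c-m)^2},
\]
which neatly separates the two ingredients we need: holomorphy and boundedness of $m$ (to control the denominator), and the $L_2$ estimate on $m'$.

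The choice of $\delta_c$ is dictated by avoiding the pole of $\frac{m}{c-m}$, that is, by keeping $|m(z)|<c$ throughout the strip. The map $\delta\mapsto\frac{\tan\delta}{\delta}$ is strictly increasing and carries $(0,\frac{\pi}{2})$ onto $(1,\infty)$; since $c>1$ we have $c^2>1$, so there is a unique $\delta^\ast\in(0,\frac{\pi}{2})$ with $\frac{\tan\delta^\ast}{\delta^\ast}=c^2$, and we fix any $\delta_c\in(0,\delta^\ast)$. This $\delta_c$ may plainly be taken increasing in $c$, with $\delta_c\to0$ as $c\to1^+$, in agreement with the remark preceding the lemma. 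By Lemma~\ref{lemma:analyticity}, $m$ is holomorphic on $|\Ima z|\leq\delta_c$ and satisfies $|m(z)|\leq\sqrt{\tan\delta_c/\delta_c}<c$ there. Consequently $c-m(z)$ is holomorphic and bounded away from zero on the strip, so $\frac{m}{c-m}$, and with it $g$, extends holomorphically to $|\Ima z|\leq\delta_c$.

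It remains to bound $g$ in $L_2$ along horizontal lines. For $|y|\leq\delta_c$ the reverse triangle inequality yields $|c-m(z)|\geq c-\sqrt{\tan\delta_c/\delta_c}=:\kappa>0$, uniformly in the strip, whence
\[
\|g(\cdot+iy)\|_{L_2(\R)}=\left\|\frac{c\,m'(\cdot+iy)}{(c-m(\cdot+iy))^2}\right\|_{L_2(\R)}\leq\frac{c}{\kappa^2}\,\|m'(\cdot+iy)\|_{L_2(\R)}\lesssim 1,
\]
the final bound being exactly the uniform estimate $\sup_{|y|\leq\delta_c}\|m'(\cdot+iy)\|_{L_2(\R)}\lesssim 1$ furnished by Lemma~\ref{lemma:analyticity}. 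In particular $g\in L_2(\R)$ on taking $y=0$.

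With holomorphy and the uniform $L_2$ bound in hand, Paley--Wiener gives $e^{\delta_c|\cdot|}\mathcal{F}(g)\in L_2(\R)$; reflecting $x\mapsto-x$ and using $\mathcal{F}^{-1}(g)(x)=\frac{1}{2\pi}\mathcal{F}(g)(-x)$ together with $xH_c(x)=-i\mathcal{F}^{-1}(g)(x)$ then yields $e^{\delta_c|x|}xH_c(x)\in L_2(\R)$, as claimed. The only genuinely delicate point is the pole-avoidance that fixes $\delta_c$: the admissible decay rate is limited precisely by how wide a strip one can take before $|m|$ attains the level $c$, which explains why the rate degenerates as $c\to1^+$. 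Everything else reduces to the holomorphy and the $L_2$ derivative estimate already established in Lemma~\ref{lemma:analyticity}.
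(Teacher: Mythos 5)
Your proof is correct and follows essentially the same route as the paper's: choose $\delta_c$ via Lemma~\ref{lemma:analyticity} so that $|m(z)|<c$ on the strip (hence $(c-m)^{-2}$ is holomorphic and bounded there), combine this with the uniform $L_2$ bound on $m'$ to get $\sup_{|y|\leq\delta_c}\|g(\cdot+iy)\|_{L_2(\R)}\lesssim 1$, and conclude by Paley--Wiener applied to $g$. Your explicit formula $g = c\,m'/(c-m)^2$, the monotonicity argument pinning down $\delta^\ast$, and the reflection step from $\mathcal{F}(g)$ to $\mathcal{F}^{-1}(g)$ merely spell out details the paper leaves implicit.
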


\begin{proof}
Fix $c>1$, and let $g$ be as in \eqref{g}. We first find a constant $\delta_c$ such that $g$ is analytic in  $|\Ima z|\leq\delta_c$ with \(\sup_{|y|\leq\delta_c}\|g(\cdot +iy)\|_{L_2(\R)} \lesssim 1\). In view of Lemma~\ref{lemma:analyticity}, and since  $\lim_{\delta\to 0}\sqrt{\frac{\tan{\delta}}{\delta}} =1$, there exists $\delta_c \in (0,\frac{\pi}{2})$ such that $\sup_{| \Ima z| \leq \delta_c} |m(z)|<c$. Hence, $(c-m)^{-2}$ is holomorphic and bounded in the same strip.  We already know that $m^\prime$ is analytic in $|\Ima z|\leq\delta_c < \frac{\pi}{2}$ and uniformly $L_2(\R)$-bounded for all $|y|\leq\delta_c$. Consequently, $g$  is analytic there, too, with
\[
\sup_{|y| \leq \delta_c} \|g(\cdot +iy)\|_{L_2(\R)} \lesssim_c \sup_{|y| \leq \delta_c}   \| m^\prime(\cdot +iy)\|_{L_2(\R)}\lesssim_c 1.
\]
The result is now a direct consequence of Paley--Wiener theory.
One then has
\[e^{\delta |\cdot|}\mathcal{F}(g) \in L_2(\R).\]
Recalling that $\mathcal{F}^{-1}(g)(x)=ixH_c(x)$, we deduce the asserted decay for \(H_c\).
\end{proof}

\begin{lem}\label{Hm near 0}
For \(|x| \leq 1\),  one has $H_c(x) \eqsim |x|^{-\frac{1}{2}}$.
\end{lem}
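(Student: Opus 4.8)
The plan is to establish the near-origin asymptotics $H_c(x)\eqsim|x|^{-1/2}$ by comparing $H_c$ with a kernel whose singularity is explicitly understood, namely the kernel $K=\mathcal{F}^{-1}(m)$ itself. We already know from the preliminaries that $K$ is the inverse transform of $m$, that $m(\xi)\eqsim|\xi|^{-1/2}$ for $|\xi|\ge 1$, and that $K$ is singular at the origin. The precise singularity of $K$ at $0$ comes from the large-frequency behavior $m(\xi)\sim|\xi|^{-1/2}$, and by a standard computation (the inverse transform of $|\xi|^{-1/2}$ is a multiple of $|x|^{-1/2}$) one obtains $K(x)\eqsim|x|^{-1/2}$ for $|x|\le 1$. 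I would take this singularity of $K$ as the reference and show that $H_c-\tfrac1c K$ is bounded (indeed continuous) near the origin, so that the leading singular behavior of $H_c$ is $\tfrac1c$ times that of $K$.

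\smallskip

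The key algebraic observation is that
\[
\frac{m}{c-m}=\frac{1}{c}\,m+\frac{1}{c}\,\frac{m^2}{c-m},
\]
so that on the Fourier side $H_c=\tfrac1c K+\tfrac1c\,\mathcal{F}^{-1}\!\big(\tfrac{m^2}{c-m}\big)$. The correction symbol $\tfrac{m^2}{c-m}$ decays like $|\xi|^{-1}$ for large $|\xi|$ (since $m\eqsim|\xi|^{-1/2}$ and $c-m\to c$), which is integrable improvement over $m$; more to the point, I would check that $\tfrac{m^2}{c-m}\in L_1(\R)$, whence its inverse Fourier transform is a bounded continuous function on $\R$. This would give $H_c(x)=\tfrac1c K(x)+O(1)$ near the origin, and since $K(x)\eqsim|x|^{-1/2}$ blows up at $x=0$ while the correction stays bounded, we conclude $H_c(x)\eqsim|x|^{-1/2}$ for $|x|\le 1$.

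\smallskip

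Concretely I would carry out the steps in the order: (i) record the singularity of $K$, either by citing the completely-monotone/positivity structure of $K$ from \cite{EW} together with $m(\xi)\eqsim|\xi|^{-1/2}$, or by splitting $m=\eta m+(1-\eta)m$ with a smooth cutoff $\eta$ supported near the origin, so that the high-frequency part $(1-\eta)m$ reproduces the $|\xi|^{-1/2}$ tail and the low-frequency part contributes a smooth kernel; (ii) verify $\tfrac{m^2}{c-m}\in L_1(\R)$ using $c>1$ (which keeps $c-m$ bounded away from zero since $m\le m(0)=1<c$) and the $|\xi|^{-1}$ decay; (iii) combine via the identity above to isolate the singular part. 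The main obstacle is establishing the two-sided bound $K(x)\eqsim|x|^{-1/2}$ rather than merely an upper bound: the upper estimate follows directly from the integrability of the tail after subtracting the model symbol, but the matching lower bound requires showing the leading coefficient is genuinely nonzero, which is where the explicit inverse transform of the model symbol $|\xi|^{-1/2}$ (with its positive constant) is needed. Once that positive constant is in hand, the boundedness of the correction term guarantees it cannot cancel the singularity for small $|x|$, and the two-sided bound for $H_c$ follows.
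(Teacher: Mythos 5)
Your algebraic identity $\frac{m}{c-m}=\frac1c m+\frac1c\frac{m^2}{c-m}$ is correct, and the idea of splitting off the singular part is reasonable, but step (ii) contains a genuine error that breaks the argument: the correction symbol $\frac{m^2}{c-m}$ is \emph{not} in $L_1(\R)$. Indeed $m^2(\xi)=\tanh(\xi)/\xi\eqsim|\xi|^{-1}$ for $|\xi|\geq 1$ and $c-m(\xi)\to c$ at infinity, so $\frac{m^2}{c-m}(\xi)\eqsim|\xi|^{-1}$ for large $|\xi|$, whose integral diverges logarithmically. Passing from the $|\xi|^{-1/2}$ decay of $m$ to the $|\xi|^{-1}$ decay of the correction is an improvement, but it lands exactly on the non-integrable borderline. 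Consequently you cannot conclude that $\mathcal{F}^{-1}\bigl(\tfrac{m^2}{c-m}\bigr)$ is bounded near the origin; symbols with $|\xi|^{-1}$ tails typically produce kernels with a logarithmic singularity at $x=0$, so the claimed decomposition $H_c=\frac1c K+O(1)$ is unjustified and very likely false as stated.

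The gap is repairable along your own lines, but it requires one more iteration of the identity: $\frac{m}{c-m}=\frac1c m+\frac1{c^2}m^2+\frac1{c^2}\frac{m^3}{c-m}$. Now $\frac{m^3}{c-m}\eqsim|\xi|^{-3/2}$ at infinity, which genuinely belongs to $L_1(\R)$ and gives a bounded continuous kernel; but you must then show separately that $\mathcal{F}^{-1}(m^2)$ has at worst a logarithmic singularity at the origin, hence is $o(|x|^{-1/2})$ there --- an additional argument your proposal does not supply and which cannot be deduced from $L_1$ membership of that symbol. For comparison, the paper takes a completely different and more direct route: it uses that $g=\bigl(\frac{m}{c-m}\bigr)'$ is odd, writes $xH_c(x)=-i\mathcal{F}^{-1}(g)(x)$ as a sine integral, rescales $\xi=s/x$, and applies dominated convergence to compute the exact limit of $\sqrt{x}\,H_c(x)$ as $x\searrow0$, which is a positive multiple of $c^{-2}$. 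This yields the strictly positive leading coefficient in one stroke --- precisely the point your steps (i) and (iii) must also labor to establish --- and the two-sided bound then follows from evenness and continuity of $H_c$ away from the origin.
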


\begin{proof}
Note first that $g$ is odd. Thus, for \(x > 0\),
\begin{align*}
	xH_c(x) &= -i\mathcal{F}^{-1}(g)(x)\\
	&=-\displaystyle{\int_\R} g(\xi)\sin{x\xi} \,\dd\xi\\
	&=-\displaystyle{\int_\R} g\left(\frac{s}{x}\right)\frac{\sin{s}}{x}\,\dd s\\ 
	&=-\sqrt{x}\displaystyle{\int_0^\infty} \frac{\sin (s)}{s^{\frac{3}{2}}} \frac{1}{(c-m\left(\frac{s}{x}\right))^2}f\left(\frac{s}{x}\right)\,\dd s,
\end{align*}
where
\[
f(\xi) = \frac{\xi \sech^2{\xi}-\tanh{\xi}}{\sqrt{\tanh{\xi}}}.
\]
Since $c>1 \geq m$, the function $\frac{1}{(c-m)^2}f$ is bounded on $\R$ and tends to \(-\frac{1}{c^{2}}\) at infinity. By dominated convergence, we obtain that
\[
\lim_{x\searrow 0}c\sqrt{x}H_c(x)= \frac{1}{c^2}\displaystyle{\int_0^\infty}\frac{\sin (s)}{s^{\frac{3}{2}}} \, \dd s= \frac{\sqrt{2\pi}}{c^2}.
\]
The statement now follows from $H_c$ being even and continuous outside of the origin.
\end{proof}

A more detailed analysis of the function $g$ in \eqref{g} yields that $H_c$ decays not only exponentially in $L_2(\R)$ but also pointwise, although the former is enough to guarantee the exponential decay of solitary solutions to the Whitham equation.

\begin{prop}\label{kernel L infinity decay}
Let $\delta_c>0$ be as in Lemma \ref{Kexp} and $0<\delta<\delta_c$. Then
\[H_c(x) \lesssim e^{-\delta |x|}\qquad \mbox{for}\quad |x|\geq 1.\]
\end{prop}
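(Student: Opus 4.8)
The goal is to upgrade the $L_2$-type exponential decay of $xH_c$ from Lemma~\ref{Kexp} into a pointwise exponential bound on $H_c$ itself, away from the singularity at the origin.

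\medskip

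The plan is to exploit analyticity and contour shifting, just as in the proof of Lemma~\ref{Kexp}, but now to control $H_c$ pointwise rather than in $L_2$. First I would recall that $H_c = \mathcal{F}^{-1}\big(\tfrac{m}{c-m}\big)$ and set $r(\xi) := \tfrac{m(\xi)}{c-m(\xi)}$, so that for $x>0$,
\[
H_c(x) = \frac{1}{2\pi}\int_\R r(\xi)\, e^{ix\xi}\, \dd\xi.
\]
By Lemma~\ref{lemma:analyticity} and the choice of $\delta_c$ in Lemma~\ref{Kexp}, the symbol $r$ is analytic in the strip $|\Ima z| \leq \delta_c$, since there $|m(z)| < c$ keeps $c - m(z)$ away from zero, and $m$ itself is analytic. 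The idea is then to shift the contour of integration from the real line $\R$ to the horizontal line $\R - i\delta$ for $0 < \delta < \delta_c$ (taking $\Ima z = -\delta$ so that $e^{ixz} = e^{ix\xi}e^{\delta x}$ decays for $x>0$ after accounting for signs; the sign choice is dictated by making the exponential factor beneficial). On the shifted contour $z = \xi - i\delta$ one gains the factor $e^{-\delta x}$ out front, and what remains is $\int_\R r(\xi - i\delta)\, e^{ix\xi}\, \dd\xi$ multiplied by $e^{-\delta|x|}$.

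\medskip

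The remaining integral must be shown to be bounded uniformly in $x$, which requires $\xi \mapsto r(\xi - i\delta)$ to be integrable, or at least that the oscillatory integral converges and is controlled. Here I would lean on the quantitative bounds already established: in the strip, $c - m(z)$ is bounded below in modulus, and from Lemma~\ref{lemma:analyticity} one has decay estimates on $m'$ (and hence, via $r' = \tfrac{m'}{(c-m)} + \tfrac{m m'}{(c-m)^2} = \tfrac{c\,m'}{(c-m)^2}$, on $r'$) that give the needed $L_2$, and by the near-origin behavior $m(\xi) \eqsim |\xi|^{-1/2}$ one sees $r(\xi) \eqsim |\xi|^{-1/2}$ decays along the shifted line. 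Since $r$ decays like $|\xi|^{-1/2}$, it is not itself integrable, so a bare absolute-value estimate fails; instead I would integrate by parts once in $\xi$, transferring a derivative onto $r(\cdot - i\delta)$ and gaining a factor $1/x$, using that $r'$ is integrable (it decays faster, roughly like $|\xi|^{-3/2}$, and is controlled in the strip by the analogue of \eqref{decay of l(z)}). This yields $H_c(x) \lesssim \tfrac{1}{x} e^{-\delta|x|}$ for $x > 0$, which for $|x|\geq 1$ gives the claimed $H_c(x) \lesssim e^{-\delta|x|}$; evenness of $H_c$ extends this to $x \leq -1$.

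\medskip

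The main obstacle I anticipate is twofold. First, justifying the contour shift rigorously: one must verify that the vertical connecting segments at $\Ima z \in [-\delta,0]$, $\Rey z = \pm R$, contribute vanishing boundary terms as $R \to \infty$. This needs decay of $r(z)$ as $\Rey z \to \pm\infty$ uniformly in the strip — which follows from $|m(z)| \eqsim |\Rey z|^{-1/2}$ for large real part, pushing $r(z) \to 0$, but this must be checked carefully since the oscillatory factor $e^{ixz}$ is merely bounded on the vertical segments, not decaying. Second, and relatedly, because $r$ is only in $L_2$ and not $L_1$ along the shifted line, the convergence of the integral and the integration-by-parts step require attention to the near-origin singularity of $r' \sim |\xi|^{-3/2}$ (which is not locally integrable at $0$). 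This suggests splitting the analysis into a region near the origin, where one should \emph{not} shift the contour through the singularity but rather treat the low-frequency part directly (its contribution is smooth and harmless for $|x|\geq 1$), and a high-frequency region where the shift and integration by parts apply cleanly. Reconciling these two regimes so that the boundary terms match up is where the technical care will concentrate.
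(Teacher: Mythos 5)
Your approach is sound and would prove the proposition, but it is genuinely different from the paper's. The paper never touches the oscillatory integral directly: it stays in $L_2$ throughout, applying Paley--Wiener theory to $k=\mathcal{F}^{-1}\left((\cdot)^2H_c^\prime(\cdot)\right)=2g+xg^\prime$ (where $g=(m/(c-m))^\prime$ is exactly your $r^\prime$), concluding that $e^{\delta|\cdot|}(\cdot)^{2}H_c(\cdot)\in H^1(\R)$ and then invoking the Sobolev embedding $H^1(\R)\subset L_\infty(\R)$; this gives $H_c(x)\lesssim |x|^{-2}e^{-\delta|x|}$ for $|x|\geq 1$. Your route—shift the inversion contour to $\Ima z=\pm\delta$ (upward for $x>0$, so that $e^{ix(\xi+i\delta)}=e^{ix\xi}e^{-\delta x}$; your sign hedge resolves this way), then integrate by parts once—yields $H_c(x)\lesssim |x|^{-1}e^{-\delta|x|}$, which is equally sufficient. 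The trade-off: the paper must control $m^{\prime\prime}$ in the strip (to bound $g^\prime$), which is the bulk of its proof, whereas you need only the $m^\prime$ estimates already in Lemma \ref{lemma:analyticity}; in exchange you must justify the contour shift and the conditional convergence, which the $L_2$ framework sidesteps. Both justifications you flag do go through: $|m(z)|\lesssim|\Rey z|^{-1/2}$ uniformly in the strip (read off from the explicit formula for $|m(z)|^4$), so the vertical segments at $\Rey z=\pm R$ are $O(\delta R^{-1/2})$ while $|e^{ixz}|\leq 1$ on them, and the boundary terms in the integration by parts vanish since $r(\xi+i\delta)\to 0$; the $L_1$ bound on $r^\prime(\cdot+i\delta)$ follows from \eqref{decay of l(z)} exactly as in the paper's verification that $g$ is uniformly $L_2$-bounded.

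One genuine error in your reasoning should be corrected, though it makes your proof easier, not harder: there is no ``near-origin singularity of $r^\prime$.'' The symbol $r=m/(c-m)$ is analytic in the \emph{entire} strip $|\Ima z|\leq\delta_c$, origin included, because $c-m$ is bounded away from zero there (this is precisely how $\delta_c$ is chosen in Lemma \ref{Kexp}); in particular $r^\prime$ is smooth at $0$ with $r^\prime(0)=0$ by evenness. The rate $|\xi|^{-3/2}$ is the \emph{decay of $r^\prime$ at infinity}—which is integrable—not a blow-up at the origin; likewise $m(\xi)\eqsim|\xi|^{-1/2}$ is the large-frequency behavior, not the near-origin one. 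You have conflated the singularity of the kernel $H_c$ at $x=0$ (which is caused by the slow $|\xi|^{-1/2}$ decay of the symbol at $\xi=\pm\infty$, cf.\ Lemma \ref{Hm near 0}) with a singularity of the symbol at $\xi=0$. Consequently the low-/high-frequency splitting you propose, and the matching of boundary terms between the two regimes, are unnecessary: the contour shift and the single integration by parts are valid globally, and your argument closes in a few lines.
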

\begin{proof}
It is immediate from Lemma \ref{Kexp} that $x\mapsto e^{\delta |x|}x^{2}H_c(x)$ belongs to $L_2(\R)$ for any $\delta \in [0,\delta_c)$. In view of the product rule its weak derivative is also bounded in $L_2(\R)$ provided that $e^{\delta |\cdot|}(\cdot)^{2}H^\prime(\cdot) \in L_2(\R)$. Similar as before we achieve the latter regularity by applying Paley--Wiener theory to 
\[k(x):=\mathcal{F}^{-1}\left((\cdot)^2H_c^\prime(\cdot)\right)(x)=i \left(x\frac{m(x)}{c-m(x)} \right)^{\prime\prime}= 2g(x)+xg^\prime(x),\]
where $g$ is defined in \eqref{g}. As in the proof of Lemma \ref{Kexp} it is a consequence of Lemma \ref{lemma:analyticity} and $c>1$, that $k$ is analytic in the strip $|\Ima z|\leq \delta$. In consideration of $g$ being uniformly $L_2(\R)$-bounded in the same strip, it remains to show that
 \begin{equation}\label{eq: g prime}
\sup_{|y|\leq\delta}\|(\cdot+iy)g^\prime(\cdot+iy)\|_{L_2(\R)}\lesssim_c 1.
\end{equation}
Consider  
\begin{equation*}
 m^{\prime\prime}(x)=\frac{\frac{3}{4}\tanh^{2}(x)-\sech^{2}(x)\tanh^{2}(x)x^{2}-\frac{1}{2}x\tanh(x)\sech^{2}(x)-\frac{1}{4}x^{2}\sech^{4}(x)}{x^{\frac{5}{2}}\tanh^{\frac{3}{2}}(x)}.
\end{equation*}
Since $|\tanh(x+iy)|\lesssim 1$, $|\sech(x+iy)|\lesssim e^{-|x|}$ when \(  |y|<\frac{\pi}{2}\), we have that
\begin{equation*}
|(x+iy) m^{\prime\prime}(x+iy)|\lesssim_{c}|x|^{-\frac{3}{2}}
\end{equation*}
uniformly for $|y|\leq \delta$ and $|x|>M$ if $M$ is chosen large enough.  Hence, $\{(\cdot+iy)m^{\prime\prime}(\cdot+iy)\}_{|y|\leq \delta}$ is bounded in $L_2(\R)$. In view of \eqref{decay of l(z)} not only $g$, but also $(\cdot)g^2(\cdot)$ is uniformly $L_2(\R)$-bounded within the strip $|\Ima z|\leq \delta$. Due to $c-m$ being bounded from below and above and
\[xg^\prime(x)=x\frac{m^{\prime\prime}(x)}{(c-m(x))^2}+2xg^2(x)(c-m(x))\]
 we conclude that \eqref{eq: g prime} holds true.
Eventually, Paley--Wiener theory implies that 
\begin{equation*}\label{H prime expo L2}
x\mapsto e^{\delta|x|}x^{2}H^{\prime}(x)\in L_{2}(\R)
\end{equation*}
and thus $e^{\delta|\cdot|}(\cdot)^{2}H(\cdot) \in H^1(\R)$. The Sobolev embedding $H^1(\R)\subset L_\infty(\R)$ ensures that $H_c(x)\lesssim e^{-\delta|x|}$ for $|x|\geq 1$.
\end{proof}

As a direct consequence of Lemmata~\ref{Kexp},~\ref{Hm near 0} and Proposition \ref{kernel L infinity decay} we obtain the following weighted $L_p(\R)$ integrability of $H_c$.

\begin{cor}\label{LPproperty of kernel}
One has $|\cdot|^{\alpha}H_c(\cdot)\in L_{p}(\mathbb{R})$ for $p\in[1,\infty]$  if and only if $\alpha >\frac{1}{2}-\frac{1}{p}$. In particular,  \(H_{c}\in L_{p}(\mathbb{R})\) exactly for \(p \in [1,2)\).
\end{cor}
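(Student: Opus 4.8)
The plan is to assemble the sharp pointwise information on $H_c$ established above: the local behavior $H_c(x)\eqsim|x|^{-\frac12}$ for $|x|\leq 1$ from Lemma~\ref{Hm near 0}, and the exponential tail bound $H_c(x)\lesssim e^{-\delta|x|}$ for $|x|\geq 1$ from Proposition~\ref{kernel L infinity decay}. Since the weight $|x|^{\alpha}$ grows only polynomially, the exponential decay renders the contribution of $\{|x|\geq 1\}$ finite for \emph{every} $\alpha\in\R$ and \emph{every} $p\in[1,\infty]$; hence the integrability condition is dictated entirely by the singularity at the origin. Accordingly I would split $\R$ into the unit ball $\{|x|\leq 1\}$ and its complement and treat the two regions separately.

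On the tail $\{|x|\geq 1\}$, for $p\in[1,\infty)$ one estimates $\int_{|x|\geq 1}\big(|x|^{\alpha}H_c(x)\big)^p\,dx\lesssim \int_{|x|\geq 1}|x|^{\alpha p}e^{-\delta p|x|}\,dx<\infty$, and similarly $\sup_{|x|\geq 1}|x|^{\alpha}H_c(x)\lesssim\sup_{|x|\geq 1}|x|^{\alpha}e^{-\delta|x|}<\infty$ for $p=\infty$, so the tail is always harmless and imposes no constraint on $\alpha$. On the unit ball, Lemma~\ref{Hm near 0} gives $|x|^{\alpha}H_c(x)\eqsim|x|^{\alpha-\frac12}$, so for $p\in[1,\infty)$ the local contribution is comparable to $\int_{0}^{1}x^{(\alpha-\frac12)p}\,dx$, which is finite if and only if $(\alpha-\tfrac12)p>-1$, that is, $\alpha>\tfrac12-\tfrac1p$. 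The two-sided comparison in Lemma~\ref{Hm near 0} is what makes this an equivalence: the upper bound yields sufficiency (the \emph{if} direction), while the lower bound $H_c(x)\gtrsim|x|^{-\frac12}$ forces divergence whenever $\alpha\leq\tfrac12-\tfrac1p$ and thereby gives necessity (the \emph{only if} direction).

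Specializing to $\alpha=0$ yields the final clause: $H_c\in L_p(\R)$ if and only if $0>\tfrac12-\tfrac1p$, i.e. $p<2$, so that $H_c\in L_p(\R)$ exactly for $p\in[1,2)$; in particular $H_c\notin L_2(\R)$ since $\int_0^1 x^{-1}\,dx=\infty$. I do not expect a genuine obstacle here, as the statement is essentially a bookkeeping consequence of the two-sided bounds already proved; the only point demanding a little care is the endpoint $p=\infty$, where the relevant criterion is boundedness near the origin—giving the threshold $\alpha\geq\tfrac12$—rather than the strict power-integrability threshold $\alpha>\tfrac12-\tfrac1p$ governing the finite-$p$ cases.
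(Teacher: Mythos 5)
Your proof is correct and takes essentially the same route the paper intends: the corollary is stated there without proof as a direct consequence of Lemma~\ref{Hm near 0} and Proposition~\ref{kernel L infinity decay}, which is exactly the split into $\{|x|\leq 1\}$ (where the two-sided bound $H_c(x)\eqsim |x|^{-\frac{1}{2}}$ dictates the threshold) and $\{|x|\geq 1\}$ (where the exponential tail is harmless) that you carry out. Your closing remark on $p=\infty$ is moreover a valid catch rather than a gap in your argument: since $|\cdot|^{\frac{1}{2}}H_c$ is in fact bounded on $\R$ by those same two estimates, the ``only if'' direction of the corollary as literally stated fails at the single endpoint $(p,\alpha)=(\infty,\tfrac{1}{2})$, where the sharp threshold is $\alpha\geq\tfrac{1}{2}$ rather than $\alpha>\tfrac{1}{2}$.
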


It remains to show that $H_c$ is monotonically decreasing on $(0,\infty)$. To that aim, we shall need the concept of complete monotonicity: a smooth function \(f \colon (0,\infty) \to \R\) is said to be \emph{completely monotone} if 
\[
(-1)^n f^{(n)}(x) \geq 0,
\]
for all \(x > 0\) and all \(n \in \N_0\). From \cite{EW}, we have the following result:

\begin{satz}[\cite{EW}, Proposition 2.18]\label{prop:completely monotone Fourier}
Let \(f\) and \(h\) be two functions satisfying \(f(\xi)=h(\xi^2)\).
Then \(f\) is  the Fourier transform of an even, integrable function such that
\(\mathcal{F}^{-1} (f)(\sqrt{\cdot})\) is completely monotone if and only if \(h\) is completely monotone with \(\lim_{\lambda \searrow 0} h(\lambda)<\infty\) and \(\lim_{\lambda \to \infty} h(\lambda)=0\). In this case,  \(\mathcal{F}^{-1} (f)\) is smooth and monotone outside of the origin.
\end{satz}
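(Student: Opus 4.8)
The plan is to reduce both implications to Bernstein's theorem on completely monotone functions, together with the fact that the Gaussian is, up to scaling, its own Fourier transform. Recall that Bernstein's theorem asserts that a function $h$ on $(0,\infty)$ is completely monotone if and only if it is the Laplace transform of a nonnegative Radon measure $\mu$ on $[0,\infty)$, that is $h(\lambda) = \int_{[0,\infty)} e^{-\lambda t}\,\dd\mu(t)$. In this representation the atom $\mu(\{0\})$ equals $\lim_{\lambda\to\infty} h(\lambda)$, while the total mass $\mu([0,\infty))$ equals $\lim_{\lambda\searrow 0} h(\lambda)$. Thus the two limit conditions in the statement translate precisely into the requirement that $\mu$ be a \emph{finite} measure supported on $(0,\infty)$, and this dictionary is what drives the whole argument.

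For the implication from $h$ to $f$, I would start from the Bernstein representation and substitute $\lambda = \xi^2$ to obtain $f(\xi) = h(\xi^2) = \int_0^\infty e^{-t\xi^2}\,\dd\mu(t)$. Since $\mathcal{F}^{-1}(e^{-t(\cdot)^2})(x) = \tfrac{1}{2\sqrt{\pi t}}\,e^{-x^2/(4t)}$, an application of Fubini's theorem (legitimate because all integrands are nonnegative) gives $\mathcal{F}^{-1}(f)(x) = \int_0^\infty \tfrac{1}{2\sqrt{\pi t}}\,e^{-x^2/(4t)}\,\dd\mu(t) =: K(x)$, which is manifestly even. Integrating in $x$ and using $\int_\R \tfrac{1}{2\sqrt{\pi t}}\,e^{-x^2/(4t)}\,\dd x = 1$ shows $\|K\|_{L_1} = \mu((0,\infty)) < \infty$, so $K$ is integrable. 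Finally, writing $K(\sqrt{y}) = \int_0^\infty \tfrac{1}{2\sqrt{\pi t}}\,e^{-y/(4t)}\,\dd\mu(t)$, each integrand $y\mapsto e^{-y/(4t)}$ is completely monotone, and complete monotonicity is preserved under integration against a nonnegative measure; hence $K(\sqrt{\cdot})$ is completely monotone.

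The converse runs the same computation backwards. Assuming $K := \mathcal{F}^{-1}(f)$ is even, integrable, with $K(\sqrt{\cdot})$ completely monotone, Bernstein's theorem applied to $\psi := K(\sqrt{\cdot})$ yields a nonnegative measure $\nu$ with $K(x) = \psi(x^2) = \int_{[0,\infty)} e^{-sx^2}\,\dd\nu(s)$. Taking the Fourier transform and using $\mathcal{F}(e^{-s(\cdot)^2})(\xi) = \sqrt{\pi/s}\,e^{-\xi^2/(4s)}$ identifies $h(\lambda) = \int_0^\infty \sqrt{\pi/s}\,e^{-\lambda/(4s)}\,\dd\nu(s)$, which is completely monotone by the same preservation principle; the limits $\lim_{\lambda\to\infty}h(\lambda)=0$ and $\lim_{\lambda\searrow 0}h(\lambda) = f(0) = \|K\|_{L_1} < \infty$ follow by dominated and monotone convergence, respectively (note $K\geq 0$ since $K(\sqrt{\cdot})$ is completely monotone). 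For the concluding regularity statement, smoothness of $K$ on $\R\setminus\{0\}$ comes from differentiating the Gaussian representation under the integral sign, which is justified because the $x$-derivatives of $e^{-x^2/(4t)}$ can be dominated uniformly for $|x|$ bounded away from $0$; and monotonicity on $(0,\infty)$ is immediate from $K(x)=\psi(x^2)$ with $\psi$ nonincreasing (as $\psi'\leq 0$ for completely monotone $\psi$) composed with the increasing map $x\mapsto x^2$, evenness then handling the negative half-line.

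The main obstacle I anticipate is not the Gaussian or Fubini computations, which are routine, but the careful bookkeeping at the endpoints of the Laplace representation: one must verify that the atom of the Bernstein measure at the origin corresponds exactly to the limit at infinity (and that retaining it would produce a Dirac mass rather than an $L_1$ function under $\mathcal{F}^{-1}$), and that finiteness of the total mass is precisely the $\lambda\searrow 0$ condition. Pinning down these two correspondences rigorously, so that the ``if and only if'' is genuinely sharp, is where the real content of the proposition lies.
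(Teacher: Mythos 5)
Your proposal is correct and takes what is essentially the paper's approach — in fact the paper offers no proof of this proposition at all (it is quoted verbatim from [EW, Proposition~2.18]), and the proof given there is precisely your Bernstein-theorem-plus-Gaussian-subordination argument, including the dictionary between the two limit conditions and the finiteness/atom-free properties of the representing measure. The only point to tighten is the forward direction: $f$ itself need not be integrable (for the Whitham kernel one has $f(\xi)\eqsim |\xi|^{-1/2}$ at infinity), so rather than applying $\mathcal{F}^{-1}$ to $f$ and invoking Fubini, one should \emph{define} $K(x):=\int_0^\infty \tfrac{1}{2\sqrt{\pi t}}e^{-x^2/(4t)}\,\dd\mu(t)$, check $K\in L_1(\R)$ by Tonelli, and then verify $\mathcal{F}(K)=f$ pointwise, which identifies $K=\mathcal{F}^{-1}(f)$ in the sense of tempered distributions.
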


\begin{cor}
The integral kernel \(H_c\) is positive, smooth, and monotonically decreasing on the positive half-line $(0,\infty)$.
\end{cor}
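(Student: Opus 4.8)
The plan is to deduce the statement from Proposition~\ref{prop:completely monotone Fourier}, applied to the symbol $f := \frac{m}{c-m}$ of $H_c$. Since $m$ is even and smooth with $m(0)=1$, I would first record that $m(\xi)=h_m(\xi^2)$ for a function $h_m$ which is completely monotone and satisfies $\lim_{\lambda\searrow 0}h_m(\lambda)=1$ and $\lim_{\lambda\to\infty}h_m(\lambda)=0$; this is precisely the content behind the complete monotonicity of $K=\mathcal{F}^{-1}(m)$ and follows from the ``only if'' direction of Proposition~\ref{prop:completely monotone Fourier}. Writing $f(\xi)=h(\xi^2)$ with
\[
h(\lambda)=\frac{h_m(\lambda)}{c-h_m(\lambda)},
\]
it then suffices, by the ``if'' direction of Proposition~\ref{prop:completely monotone Fourier}, to verify that $h$ is completely monotone with $\lim_{\lambda\searrow 0}h(\lambda)<\infty$ and $\lim_{\lambda\to\infty}h(\lambda)=0$.

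The two limits are immediate: because $0\leq h_m\leq 1<c$, one has $c-h_m\geq c-1>0$ throughout, so $h(\lambda)\to \frac{1}{c-1}<\infty$ as $\lambda\searrow 0$ and $h(\lambda)\to 0$ as $\lambda\to\infty$. The essential point is therefore the complete monotonicity of $h$. Here I would rewrite
\[
h=-1+\frac{c}{c-h_m},
\]
so that the additive constant $-1$ affects only the zeroth derivative, and observe that $h\geq 0$ since $h_m\geq 0$. It thus remains to show that $\frac{1}{c-h_m}$ is completely monotone.

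This is the heart of the argument, and the main obstacle. The idea is to combine complete monotonicity with the notion of a Bernstein function. Since $h_m$ is completely monotone, its derivative satisfies $(-1)^n (h_m')^{(n)}=(-1)^n h_m^{(n+1)}\leq 0$, so $-h_m'$ is itself completely monotone; as $c-h_m\geq c-1>0$, the function $c-h_m$ is nonnegative with completely monotone derivative, that is, a Bernstein function. Because $x\mapsto 1/x$ is completely monotone on $(0,\infty)$, and the composition of a completely monotone function with a Bernstein function is again completely monotone, it follows that $\frac{1}{c-h_m}$ is completely monotone. Multiplying by the positive constant $c$ and adding $-1$ preserves $(-1)^n(\cdot)^{(n)}\geq 0$ for all $n\geq 1$, while $h\geq 0$ secures the case $n=0$; thus $h$ is completely monotone.

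Finally, Proposition~\ref{prop:completely monotone Fourier} applies to $f=h(\cdot^2)$ and yields that $H_c=\mathcal{F}^{-1}(f)$ is smooth and monotone outside the origin, with $H_c(\sqrt{\cdot})$ completely monotone. In particular $H_c$ is decreasing on $(0,\infty)$, and since a completely monotone function that does not vanish identically is strictly positive, $H_c$ is positive there as well. The only delicate point is the composition step justifying the complete monotonicity of $\frac{1}{c-h_m}$; everything else is bookkeeping of limits and signs of derivatives.
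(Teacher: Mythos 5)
Your proposal is correct, and its skeleton coincides with the paper's: both reduce the corollary to Proposition~\ref{prop:completely monotone Fourier} applied to $h(\lambda)=\frac{m(\sqrt{\lambda})}{c-m(\sqrt{\lambda})}$, both check the limits $\frac{1}{c-1}$ and $0$, and both rest on the complete monotonicity of $n:=m(\sqrt{\cdot})$, which is established in \cite{EW}. Where you genuinely diverge is in how the complete monotonicity of $h$ is verified. The paper does it by direct computation: it expands $(-1)^m h^{(m)}$ via Leibniz's rule together with Fa\`a di Bruno's formula for the derivatives of $(c-n)^{-1}$, and checks that every term of the resulting sum is nonnegative. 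You instead write $h=-1+\frac{c}{c-n}$, observe that $c-n$ is a Bernstein function (it is bounded below by $c-1>0$ and its derivative $-n'$ is completely monotone), and invoke the theorem that a completely monotone function composed with a Bernstein function is completely monotone, applied to $x\mapsto 1/x$. Your route is shorter and more conceptual, but it imports a result the paper never states; since the paper does not introduce Bernstein functions, you would need to cite the composition theorem (it is standard, e.g.\ Theorem 3.7 in Schilling--Song--Vondra\v{c}ek, \emph{Bernstein Functions}) or prove it, whereas the paper's expansion is self-contained, using nothing beyond Leibniz and Fa\`a di Bruno. Two small repairs to your write-up. First, your sourcing of the complete monotonicity of $h_m=n$ is slightly off: the ``only if'' direction of Proposition~\ref{prop:completely monotone Fourier} requires that $K(\sqrt{\cdot})$, not $K$ itself, be completely monotone, while the preliminaries of the paper assert the latter; this is harmless, since you can either cite \cite{EW} directly for the complete monotonicity of $m(\sqrt{\cdot})$ (as the paper does), or note that complete monotonicity of $K$ implies that of $K(\sqrt{\cdot})$ by your own composition theorem, $\sqrt{\cdot}$ being Bernstein. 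Second, you could avoid the separate $n=0$ bookkeeping caused by the additive constant $-1$ by writing $h = n\cdot\frac{1}{c-n}$ and using that a product of completely monotone functions is completely monotone.
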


\begin{proof}
Let  $h(x) :=\frac{m(\sqrt{x})}{c-m(\sqrt{x})}$. Then $\lim_{x\to 0}h(x) = \frac{1}{c-1}$ and $\lim_{x\to\infty}h(x)=0$. Thus, in view of Proposition~\ref{prop:completely monotone Fourier}, it is sufficient to prove that the function  $h$ is completely monotone. Let $n(x) =m(\sqrt{x})$ and consider $h = \frac{n}{c-n}$. In \cite{EW}  it is proved that $n$ is completely monotone. By combining this with Leibniz's rule we obtain for $x\in (0,\infty)$ that
\begin{equation*}
\begin{split}
&(-1)^m h^{(m)}(x)\\
&=\sum_{k=0}^{m}\binom{m}{k}(-1)^{m}n^{(m-k)}\left(\frac{1}{c-n}\right)^{(k)}(x)\\
&=(-1)^{m}n^{(m)}\left(\frac{1}{c-n}\right)(x)\\
&+\sum_{k=1}^{m}\binom{m}{k} (-1)^{m-k}n^{(m-k)}(x)\sum\frac{k!}{\prod_{j=1}^{k}(b_{j}!)} \frac{\tilde{k}!}{(c-n(x))^{\tilde{k}+1}} \prod_{j=1}^{k}\left( {\textstyle\frac{(-1)^{j}n^{(j)}(x)}{j!}} \right)^{b_{j}},
\end{split}
\end{equation*}
where the second sum is over all $k$-tuples of nonnegative integers $(b_1,..., b_k)$ satisfying the constraint \(\sum_{1\leq j\leq k} j b_j = k\) and $\tilde{k}=b_{1}+\cdot\cdot\cdot+b_{k}$.
It follows immediately that \(h\) is completely monotone, whence Proposition \ref{prop:completely monotone Fourier} implies that \(H_c\) is positive, integrable, smooth, and monotone outside of the origin.
\end{proof}

\subsection{Algebraic decay of solitary solutions}
We start with a prior result on algebraic decay, displaying the importance of the quadratic nonlinearity\footnote{More precisely, it is necessary that the right-hand side of \eqref{convolution equation} has the form $H_c*G(\phi)$, where $G(s)\lesssim |s|^\gamma$ for some $\gamma>1$ and small values of $s$, cf. also \cite{bona1997decay}.}.
In particular, we make evident that for arbitrary $l\geq 0$ a supercritical solution $\phi$ tending to zero at infinity of the steady Whitham equation \eqref{convolution equation} satisfies
\[x\mapsto|x|^{l}\phi(x)\in L_{\infty}(\R).\] 

Let us begin with a lemma guaranteeing that the term $c-\phi$ on the left-hand side of the steady Whitham equation is bounded from below and above, if $c>1$ .
\begin{lem}\label{more precise bound of phi} Let $c>1$.
Any nonzero continuous bounded solution $\phi$ to the steady Whitham equation \eqref{convolution equation} satisfies
\begin{equation*}
0<\phi<c.
\end{equation*} 
If additionally $\phi(x)\to 0$ as $|x|\to \infty $, then $\sup_{x\in\R} \phi(x)<c$. 
\end{lem}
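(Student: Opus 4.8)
The plan is to work entirely with the convolution form \eqref{convolution equation}, namely $\phi(c-\phi) = H_c*\phi^2$, and to exploit the positivity of the kernel $H_c$ established just above. First I would observe that the right-hand side is pointwise nonnegative: since $H_c \ge 0$ (indeed $H_c > 0$ away from the origin, by the preceding corollary) and $\phi^2 \ge 0$, the convolution $H_c*\phi^2$ is well defined (as $H_c \in L_1(\R)$ by Corollary \ref{LPproperty of kernel} and $\phi$ is bounded) and nonnegative. Hence $\phi(x)\big(c - \phi(x)\big) \ge 0$ for every $x$. Because $c > 0$, the scalar inequality $a(c-a) \ge 0$ has solution set exactly $[0,c]$, so this immediately yields the non-strict bounds $0 \le \phi \le c$ on all of $\R$.

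To upgrade these to strict inequalities I would use a touching argument in the spirit of the strong maximum principle (compatible with the touching lemma announced in the introduction). Suppose the product $\phi(x_0)\big(c-\phi(x_0)\big)$ vanishes at some point $x_0$, i.e. $\phi(x_0) \in \{0, c\}$. Evaluating \eqref{convolution equation} at $x_0$ then forces $\int_\R H_c(x_0 - y)\phi^2(y)\,\dd y = 0$. Since $H_c(x_0 - y) > 0$ for all $y \ne x_0$ and the integrand is nonnegative, this is possible only if $\phi^2 \equiv 0$ almost everywhere, whence $\phi \equiv 0$ by continuity. This contradicts the hypothesis that $\phi$ is nonzero (and, in the case $\phi(x_0)=c$, already contradicts $\phi(x_0) = c > 0$). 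Therefore the product never vanishes, and $0 < \phi < c$ holds pointwise on $\R$.

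For the supremum statement I would add the decay hypothesis $\phi(x)\to 0$ as $|x|\to\infty$. Since $\phi$ is nonzero and positive, its supremum is strictly positive; and since $\phi$ is continuous and vanishes at infinity, this positive supremum is attained at some finite point $x^*$, so that $\sup_{x\in\R}\phi(x) = \phi(x^*)$. The strict pointwise bound $\phi(x^*) < c$ from the previous step then gives $\sup_{x\in\R}\phi(x) < c$, as claimed.

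The only delicate point is the touching step: I must ensure that the vanishing of a single convolution value $(H_c*\phi^2)(x_0)$ genuinely forces $\phi \equiv 0$. This relies on $H_c$ being \emph{strictly} positive away from the origin, so that the weight in the integral is positive off the null set $\{y = x_0\}$; the singularity of $H_c$ at the origin is harmless here, and $H_c \in L_1(\R)$ keeps the convolution finite and continuous. I would also emphasize that attainment of the maximum in the final step uses precisely the decay at infinity, which is why the strict \emph{supremum} bound requires that extra hypothesis while the pointwise bound does not.
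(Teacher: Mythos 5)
Your proof is correct, and it takes a route that differs from the paper's in one meaningful respect. The paper first imports nonnegativity of $\phi$ from an external result (\cite[Lemma 4.1]{EW}, which gives $\inf_{\R}\phi\in[0,c-1]$), and then works with the formulation \eqref{DsteadyWhitham}: since the Whitham kernel $K$ is strictly positive and $\phi\geq 0$ is nonzero, it concludes $c\phi-\phi^2=K*\phi>0$, hence $0<\phi<c$. You instead stay entirely with the form $\phi(c-\phi)=H_c*\phi^2$, where the nonlinearity $\phi^2\geq 0$ makes the sign of the right-hand side automatic: this yields $0\leq\phi\leq c$ with no a priori sign information on $\phi$, and the strict positivity of $H_c$ off the origin (from the preceding corollary, via complete monotonicity) then upgrades the bounds to strict ones through your touching argument — if $(H_c*\phi^2)(x_0)=0$ at a single point, then $\phi\equiv 0$. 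What your approach buys is self-containedness (no external citation and no separate argument for $\phi\geq 0$) and it makes transparent that the quadratic nonlinearity alone forces the two-sided bound; what the paper's approach buys is brevity, since the sign of $\phi$ and the strict positivity of $K$ are already available from \cite{EW}. The final step — positivity plus decay plus continuity implies the supremum is attained at a finite point, hence strictly below $c$ — is identical in both arguments.
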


\begin{proof}
In \cite[ Lemma 4.1]{EW} it is shown that $\inf_{x\in\R} \phi \in [0,c-1]$. Note in particular that the solution $\phi$ is nonnegative.
In view of the Whitham kernel $K$ being strictly positive, any nonzero solution to the steady Whitham equation fulfills the inequality
\[c\phi - \phi^2 = K*\phi > 0.\]
Hence, $\phi$ is bounded from below by zero and from above by $c$. Assuming that $\phi$ tends to zero at infinity, continuity implies that $\sup_{\R} \phi(x) < c$.
\end{proof}

The following theorem is the key result for algebraic decay and a modified version of \cite[Lemma 10]{MM}, where the decay properties of solitary-wave solutions to a generalized Benjamin--Ono equation is investigated. 
\begin{thm}\label{algebraic decay 1}
Let $\phi$ be a supercritical solution to the steady Whitham equation \eqref{convolution equation} and $\phi(x)\to 0$ as $|x|\to \infty$. 
Then, 
	\[x\mapsto |x|^{l}\phi(x)\in L_{q}(\mathbb{R})\] for all $q
	\in (2,\infty)$ and any $l \geq 0$. 
\end{thm}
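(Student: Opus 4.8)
The plan is to turn the convolution form \eqref{convolution equation} into a pointwise inequality and bootstrap the decay. By Lemma~\ref{more precise bound of phi} one has $0<\phi<c$ with $\sup_{\R}\phi<c$, so $c-\phi$ is bounded above and below by positive constants, and since $H_c>0$ we get $\phi\eqsim H_c*\phi^2$; in particular $\phi(x)\le \kappa^{-1}(H_c*\phi^2)(x)$ pointwise, where $\kappa:=c-\sup_{\R}\phi>0$. Writing $\langle x\rangle:=(1+|x|^2)^{1/2}$ and using $|x|^l\le\langle x\rangle^l$, it suffices to prove $\langle\cdot\rangle^l\phi\in L_q(\R)$ for all $q\in(2,\infty)$. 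I would argue by induction on the weight, gaining a fixed amount $\beta>0$ per step from the base case $l=0$.

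For the base case I would exploit that $\phi$ is small at infinity. Fix $R$ so large that $\eta:=\sup_{|y|>R}\phi$ satisfies $\frac{\eta}{\kappa}\|H_c\|_1<1$, and note $\phi^2\le \|\phi\|_\infty\,\phi\,\chi_{\{|y|\le R\}}+\eta\,\phi$. This yields the pointwise inequality $\phi\le W+\tfrac{\eta}{\kappa}\,H_c*\phi$, where $W:=\tfrac{\|\phi\|_\infty}{\kappa}\,H_c*\big(\phi\,\chi_{\{|y|\le R\}}\big)\in L_q$ for every $q$ by Young's inequality, since $H_c\in L_1(\R)$ (Corollary~\ref{LPproperty of kernel}) and the localized factor is bounded with compact support. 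The apparent circularity---one would like to absorb $\tfrac{\eta}{\kappa}H_c*\phi$ but does not yet know $\phi\in L_q$---is resolved by a monotone Neumann iteration: because $H_c\ge 0$, the affine map $\mathcal S f:=W+\tfrac{\eta}{\kappa}H_c*f$ is order preserving and a contraction on $L_q$ with constant $\theta:=\tfrac{\eta}{\kappa}\|H_c\|_1<1$, so its fixed point $f^\ast=\sum_{k\ge 0}\big(\tfrac{\eta}{\kappa}H_c*\big)^k W\in L_q$; iterating $\phi\le\mathcal S\phi$ and using that the remainder $\big(\tfrac{\eta}{\kappa}H_c*\big)^n\phi$ tends to $0$ uniformly (here boundedness of $\phi$ enters, as $\|(\tfrac{\eta}{\kappa}H_c*)^n\phi\|_\infty\le\theta^n\|\phi\|_\infty$) gives $\phi\le f^\ast$, hence $\phi\in L_q$ for all $q\in(2,\infty)$.

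For the inductive step, assume $\langle\cdot\rangle^l\phi\in L_q$ for all $q\in(2,\infty)$ and target the weight $\langle\cdot\rangle^{l+\beta}$. I would distribute the weight across the convolution by splitting the integration into $\{|x-y|\ge|x|/2\}$, where $\langle x\rangle\lesssim\langle x-y\rangle$, and its complement, where $\langle x\rangle\lesssim\langle y\rangle$. On the first region the weight lands on the kernel, giving $(\langle\cdot\rangle^{l+\beta}H_c)*\phi^2$; this lies in $L_q$ because the exponential decay of $H_c$ (Proposition~\ref{kernel L infinity decay}) makes $\langle\cdot\rangle^{l+\beta}H_c\in L_1(\R)$ (Corollary~\ref{LPproperty of kernel}), while $\phi^2\in L_q$ follows from the base case since $\phi\in L_{2q}$. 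On the second region the weight lands on the quadratic term, and the key is to write $\langle y\rangle^{l+\beta}\phi^2(y)=\big(\langle y\rangle^{l+\beta}\phi(y)\big)\phi(y)$ and split again at $|y|=R$: this produces a bounded, compactly supported contribution together with a term dominated by $\tfrac{\eta}{\kappa}H_c*\big(\langle\cdot\rangle^{l+\beta}\phi\big)$. Altogether $\langle\cdot\rangle^{l+\beta}\phi\le W_1+\theta'\,H_c*\big(\langle\cdot\rangle^{l+\beta}\phi\big)$ with $W_1\in L_q$ and $\theta'<1$, exactly the structure of the base case. To run the same monotone iteration without assuming finiteness, I would truncate the weight to $w_\Lambda:=\min\!\big(\langle\cdot\rangle^{l+\beta},\Lambda\big)$, which is bounded so that $w_\Lambda\phi$ is bounded and the base-case argument applies verbatim, yielding an $L_q$ bound on $w_\Lambda\phi$ uniform in $\Lambda$; monotone convergence as $\Lambda\to\infty$ then gives $\langle\cdot\rangle^{l+\beta}\phi\in L_q$, and iterating reaches every $l\ge 0$.

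The main obstacle is the self-referential character of the absorption step: the tail of the nonlinearity is controlled only by $\phi$ itself, so one cannot close the estimate in $L_q$ without first knowing the quantity to be estimated is finite. The quadratic nonlinearity is precisely what makes this work, since one factor of $\phi$ is converted into the arbitrarily small constant $\eta$ through $\phi\to 0$; this both produces the contraction and is the reason a power strictly larger than one is needed (cf.\ the footnote following \eqref{convolution equation} and the Benjamin--Ono comparison in the introduction). Making the fixed-point/monotone-iteration argument rigorous---via positivity of $H_c$, boundedness of $\phi$ to kill the Neumann remainder, and the weight truncation to secure a priori finiteness---is the technical heart; the exponential decay of $H_c$ then furnishes an essentially unlimited gain of algebraic weight per step, the range $q\in(2,\infty)$ being the one that propagates through the induction (the exclusion of the endpoint reflecting $H_c\notin L_2(\R)$).
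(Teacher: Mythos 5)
Your proposal is correct, and it shares the paper's skeleton---use the quadratic nonlinearity to convert one factor of $\phi$ into an arbitrarily small constant far out, truncate the weight to secure a priori finiteness, then absorb---but your implementation is genuinely different. The paper works at the level of $L_q$-norms: it applies H\"older's inequality against the weighted kernel $(1+|\cdot|)^{\alpha}H_c\in L_p(\R)$, $p\in(1,2)$, $q=p'$, to obtain $|\phi(x)|\le K_{\alpha,p}\bigl(\int_\R |\phi^2(y)|^q(1+|x-y|)^{-\alpha q}\,\dd y\bigr)^{1/q}$, regularizes the weight as $h_\eps=|x|^{l}(1+\eps|x|)^{-\alpha}\phi$ (your cap $\min(\langle\cdot\rangle^{l+\beta},\Lambda)$ plays exactly this role), splits $\|h_\eps\|_{L_q}^q$ at $R_\delta$, divides through by the unknown-but-finite factor $\bigl(\int_{|x|\ge R_\delta}|h_\eps|^q\,\dd x\bigr)^{(q-r)/q}$, and absorbs by means of Fubini and the Bona--Li weighted convolution inequality (Lemma~\ref{main inequality lemma}), letting $\eps\to0$ at the end. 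Your absorption is instead order-theoretic and pointwise: positivity of $H_c$ makes $f\mapsto W+\tfrac{\eta}{\kappa}H_c*f$ monotone, the Neumann series converges because $\tfrac{\eta}{\kappa}\|H_c\|_1<1$, and boundedness of the truncated weighted function kills the remainder, so no division by an unknown norm and no analogue of Lemma~\ref{main inequality lemma} is needed; the weight is distributed by the elementary splitting $|x-y|\gtrless|x|/2$ rather than by that lemma. This buys you something: your argument in fact yields $\langle\cdot\rangle^{s}\phi\in L_q$ for \emph{every} $q\in[1,\infty]$ together with the pointwise domination $\phi\le f^*$, so your closing remark that the range $q\in(2,\infty)$ is what ``propagates through the induction'' and reflects $H_c\notin L_2(\R)$ describes the paper's method (where $q$ is conjugate to $p\in(1,2)$), not yours. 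Two small points to tighten: (i) your induction is vacuous in a harmless way---the inductive step never uses the hypothesis $\langle\cdot\rangle^{l}\phi\in L_q$, since the first region only needs $\phi\in L_{2q}$ from the base case and the second region is absorbed---so a single step handles every weight $s\ge0$ at once; (ii) the constant $2^{s}$ arising from $\langle x\rangle\le 2\langle y\rangle$ on $\{|y|>|x|/2\}$ multiplies $\tfrac{\eta}{\kappa}$, so $R$ (hence $\eta$) must be chosen depending on $s$ to keep $\theta'<1$; since $\eta\to0$ as $R\to\infty$ this is harmless, but it should be stated.
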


\begin{proof}
	In view of Lemma \ref{more precise bound of phi}, there exists a constant $M\in(0,c)$ such that $\sup \phi = M$.
	Choose $p\in (1,2)$ and let $\alpha=\alpha(p)$ be a positive constant satisfying
	\begin{equation*}
	\alpha >1-\frac{1}{p}.
	\end{equation*}
	 Corollary \ref{LPproperty of kernel} guarantees that the function $(1+|\cdot|)^{\alpha}H_c(\cdot)$ is bounded in $L_{p}(\R)$. We set 
	 \begin{equation*}
	 K_{\alpha,p}:=(c-M)^{-1}\|(1+|\cdot|)^{\alpha}H_c(\cdot)\|_{L_{p}(\R)}.
	 \end{equation*} 
	  Let $q$ be the conjugate of $p$, i.e. $\frac{1}{p}+\frac{1}{q}=1$.
	 As $\phi$ is a solution to \eqref{convolution equation}, we can write
	\begin{align*}
	\phi(c-\phi)(x)&=(H_c*\phi^2)(x)\\
	&=\int_{\R}H_c(x-y)(1+|x-y|)^{\alpha}\cdot \frac{\phi^2(y)}{(1+|x-y|)^{\alpha}}\dd y
	\end{align*}
	 and obtain by  H\"{o}lder's inequality that
	\begin{equation}\label{f control 1}
	|\phi(x)|\leq K_{\alpha,p}\left(\int_{\R}\frac{|\phi^2(y)|^{q}}{(1+|x-y|)^{\alpha q}}\dd y \right)^{\frac{1}{q}}.
	\end{equation} 
	Let $l\in [0,\alpha-\frac{1}{q})$. Then $\alpha> l+\frac{1}{q}$ and we define 
	\begin{equation*}
	h_{\e}(x):=\frac{|x|^{l}}{(1+\e |x|)^{\alpha}}\phi(x)
	\end{equation*}
	 for $0<\e<1$.
	 For each $\e\in (0,1)$ fixed, the function $h_{\e}$ is bounded in $L_{q}(\R)$, by the choice of $l$ and $\phi$ being bounded.  The aim is to prove that $\{h_\e\mid \e\in(0,1)\}$ is uniformly bounded in $ L_{q}(\R)$, which implies that $\lim_{\e\to 0} h_\e = |\cdot|^l \phi$ belongs to $L_q(\R)$, by dominated convergence.
	 Since  $\phi$ tends to zero as $|x|\to \infty$, the quadratic nonlinearity provides that for every $\delta>0$ there exists  a constant $R_{\delta}>1$ such that
	 \begin{equation*}
	|\phi^2(x)|\leq \delta|\phi(x)|\qquad \mbox{for}\quad |x|\geq R_{\delta}.
	\end{equation*}
	Estimating
	\begin{equation}\label{spitting h}
	\|h_\e\|^q_{L_q(\R)}= \int_\R \left|h_\e(x)\right|^q \dd x \leq C+ \int_{|x|\geq R_\delta} \left|h_\e(x)\right|^q \dd x,
	\end{equation}
	where $C=C(R_\delta)>0$ is a constant independent of $\e$,
	we are left to study the last integral on the right-hand side of \eqref{spitting h}. 
	Let $r\in (0,q)$. Thanks to \eqref{f control 1} and H\"older's inequality we have that 
	\begin{align*}
	&\int_{|x|\geq R_\delta}|h_{\e}(x)|^{q}\dd x \leq\int_{|x|\geq R_\delta}|h_{\e}(x)|^{q-r}\bigg(\frac{|x|^{l}}{(1+\e|x|)^{\alpha}}\bigg)^{r}|\phi(x)|^{r}\dd x\\
	&\qquad\leq \int_{|x|\geq R_\delta}|h_{\e}(x)|^{q-r}\bigg(\frac{|x|^{l}}{(1+\e|x|)^{\alpha}}\bigg)^{r} K^{r}_{\alpha,p}\bigg(\int_{\R}\frac{|\phi^2(y)|^{q}}{(1+|x-y|)^{\alpha q}}\dd y\bigg)^{\frac{r}{q}}\dd x\\
	&\qquad\leq K^{r}_{\alpha,p}\bigg[\int_{|x|\geq R_\delta}|h_{\e}(x)|^{q}\dd x\bigg]^{\frac{q-r}{q}}\\
	&\qquad\quad\cdot\bigg[\int_{|x|\geq R_\delta}\bigg(\frac{|x|^{l}}{(1+\e|x|)^{\alpha}}\bigg)^{q}\left(\int_{\R} \frac{|\phi^2(y)|^{q}}{(1+|x-y|)^{\alpha q}}\dd y\right)\dd x\bigg]^{\frac{r}{q}}.
	\end{align*}
	Dividing both sides of the inequality by $\bigg[\int_{|x|\geq R_\delta}|h_{\e}(x)|^{q}\dd x\bigg]^{\frac{q-r}{q}}$ we find that\footnote{Note that the term we are dividing by vanishes if and only if $\phi=0$ everywhere in $\{|x|\geq R_\delta\}$, in which case the lemma is obviously true. }
	\begin{equation}\label{comb 1}
	\int_{|x|\geq R_\delta}|h_{\e}(x)|^{q}\dd x\leq K^{q}_{\alpha,p} \int_{|x|\geq R_\delta}\bigg(\frac{|x|^{l}}{(1+\e|x|)^{\alpha}}\bigg)^{q}\left(\int_{\R} \frac{|\phi^2(y)|^{q}}{(1+|x-y|)^{\alpha q}}\dd y\right)\dd x.
	\end{equation}
	One can then invoke Fubini's Theorem and Lemma \ref{main inequality lemma} to obtain that
	\begin{align}\label{comb 2}
	\begin{split}
	\int_{|x|\geq R_\delta}&\bigg[\bigg(\frac{|x|^{l}}{(1+\e|x|)^{\alpha}}\bigg)^{q}\int_{\R} \frac{|\phi^2(y)|^{q}}{(1+|x-y|)^{\alpha q}}dy\bigg]\dd x \\
	&=\int_{\R}|\phi^2(y)|^{q}\bigg[\int_{|x|\geq R_\delta}\frac{|x|^{lq}}{(1+\e|x|)^{\alpha q}(1+|x-y|)^{\alpha q}}\dd x\bigg]\dd y\\
	&\leq \int_{|x|\geq R_\delta}|\phi^2(y)|^{q}\frac{B|y|^{lq}}{(1+\e|y|)^{\alpha q}}\dd y\\
	&\quad+\int_{|y|<R_\delta}|\phi^2(y)|^{q}\int_{|x|\geq R_\delta} \frac{|x|^{lq}}{(1+\e|x|)^{\alpha q}(1+|x-y|)^{\alpha q}}\dd x\dd y,
	\end{split}
	\end{align}
	where $B=B(l,q,\alpha)>0$ does not depend on $\e$. Since $\alpha q>1$ and $lq<\alpha q-1$, by the choice of $l$,  the last integral in \eqref{comb 2} is bounded by a constant $C_1=C_1(\|\phi\|_\infty,R_{\delta})>0$ depending on the norm of $\phi$ and $R_{\delta}$ (but not on $\e$).
	Combining \eqref{comb 1}, \eqref{comb 2} and recalling that $|\phi^2(y)|<\delta|\phi(y)|$ for all $|y|\geq R_\delta$, we deduce that
	\begin{equation}\label{part 1 bound}
	\int_{|x|\geq R_\delta}|h_{\e}(x)|^{q} \dd x\leq K_{\alpha,p}^p \left[\delta ^qB\int_{|x|\geq R_\delta}|h_{\e}(x)|^{q}\dd x+C_1\right] .
	\end{equation}
	 Choosing $\delta$ small enough so that $K_{\alpha,p}^p\delta^qB<\frac{1}{2}$ , \eqref{part 1 bound} implies that
	\begin{equation*}
	\int_{|x|\geq R_\delta}|h_{\e}(x)|^{q}dx\leq C_{2},
	\end{equation*}
	where $C_2=C_{2}(\alpha, p, \|\phi\|_\infty,R_{\delta})>0$ is a constant which does not rely on $\e$. 
	Hence, we have shown that
\[\int_\R |h_{\e}(x)|^{q}dx\lesssim 1.\]
Letting $\e\to 0$, dominated convergence ensures that
	\begin{equation*}
	\int_\R |x|^{lq}|\phi(x)|^{q}dx\lesssim  1,
	\end{equation*}
	which implies in particular $x\mapsto|x|^{l}f(x)\in L_{q}(\R)$ for $q=\frac{p}{p-1}$ and $l\in[0,\alpha-\frac{1}{p})$. Having at hand that $\alpha$ can be chosen arbitrarily large and $p\in(1,2)$, the statement is proved.
\end{proof}

\begin{remark}\emph{
Note that the proof uses only the algebraic decay properties of $H_c$, that is $|\cdot|^\alpha H_c \in L_p(\R)$ for $p\in [1,2)$  and $\alpha>0$. It is apparent from the proof that the decay rate $l$ depends increasingly on $\alpha$.}

\end{remark}
The following algebraic decay result is an immediate consequence of the previous theorem.

\begin{cor}[Algebraic decay]\label{algebraic decay}
	Let $\phi$ be a supercritical solution to the steady Whitham equation \eqref{convolution equation} and $\phi\to 0$ as $|x|\to\infty$. Then
	 \[x\mapsto|x|^{l}f(x)\in L_{\infty}(\R),\]
	 for any $l\geq 0$.

\end{cor}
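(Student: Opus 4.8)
The plan is to bootstrap from the weighted $L_q$-bounds of Theorem~\ref{algebraic decay 1} to the weighted $L_\infty$-bound by exploiting the convolution structure of \eqref{convolution equation}. First I would recall from Lemma~\ref{more precise bound of phi} that $M:=\sup_{\R}\phi<c$, so that $c-\phi\geq c-M>0$ and the equation may be rewritten as $\phi=(c-\phi)^{-1}(H_c*\phi^2)$. This yields the pointwise estimate
\[
|x|^{l}|\phi(x)|\leq \frac{1}{c-M}\int_{\R}|x|^{l}H_c(x-y)\phi^2(y)\,\dd y,
\]
valid for every $l\geq 0$, and it remains to bound the right-hand side uniformly in $x$.

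The key step is to distribute the weight $|x|^{l}$ across the convolution. Using the elementary inequality $|x|^{l}\lesssim_{l}|x-y|^{l}+|y|^{l}$, the integral splits into the two convolution-type terms $\big((|\cdot|^{l}H_c)*\phi^2\big)(x)$ and $\big(H_c*(|\cdot|^{l}\phi^2)\big)(x)$. Each of these I would estimate by H\"older's inequality with a conjugate exponent pair $(p,p')$ chosen so that $p\in[1,2)$ and hence $p'\in(2,\infty)$.

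For the first term H\"older gives the bound $\||\cdot|^{l}H_c\|_{L_p}\|\phi^2\|_{L_{p'}}$. Here $|\cdot|^{l}H_c\in L_p(\R)$ by Corollary~\ref{LPproperty of kernel}, since for $p<2$ the threshold $\tfrac12-\tfrac1p$ is negative and every $l\geq 0$ is admissible; moreover $\phi^2\in L_{p'}(\R)$ because $\phi$ is bounded and lies in $L_q(\R)$ for all $q\in(2,\infty)$ by Theorem~\ref{algebraic decay 1} (note $2p'>4$). For the second term H\"older gives $\|H_c\|_{L_p}\||\cdot|^{l}\phi^2\|_{L_{p'}}$, and since $\phi$ is bounded this is controlled by $\|H_c\|_{L_p}\|\phi\|_{\infty}\||\cdot|^{l}\phi\|_{L_{p'}}$, where $H_c\in L_p(\R)$ for $p<2$ and $|\cdot|^{l}\phi\in L_{p'}(\R)$ for $p'>2$, both again by Corollary~\ref{LPproperty of kernel} and Theorem~\ref{algebraic decay 1}. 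All the resulting constants are independent of $x$, so the two bounds combine to give $\sup_{x\in\R}|x|^{l}|\phi(x)|<\infty$, which is the assertion.

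Since Theorem~\ref{algebraic decay 1} already supplies the weighted $L_q$-decay for every $l\geq 0$ and every finite $q>2$, I do not anticipate a genuine obstacle; the only point demanding care is the bookkeeping in the weight-splitting, namely verifying that the chosen conjugate pair simultaneously places $|\cdot|^{l}H_c$ (and $H_c$) on the $L_p$-side with $p<2$ and the weighted powers of $\phi$ on the $L_{p'}$-side with $p'>2$, where the earlier results apply.
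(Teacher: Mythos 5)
Your proposal is correct and follows essentially the same route as the paper's own (very terse) proof: the lower bound $c-\phi\geq c-M>0$ from Lemma~\ref{more precise bound of phi}, the weight-splitting $|x|^{l}\lesssim_{l}|x-y|^{l}+|y|^{l}$ giving the two terms $\bigl(|\cdot|^{l}H_c*\phi^2\bigr)(x)$ and $\bigl(H_c*|\cdot|^{l}\phi^2\bigr)(x)$, and then the $L_p$--$L_{p'}$ convolution estimate (the paper cites Young's inequality, which at the endpoint $r=\infty$ is exactly your H\"older step) combined with Corollary~\ref{LPproperty of kernel} and Theorem~\ref{algebraic decay 1}. Your version merely spells out the exponent bookkeeping that the paper leaves implicit, and it is sound.
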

\begin{proof} Let $l\geq 0$ be arbitrary. Then, Lemma \ref{more precise bound of phi} implies that
\begin{align*}\label{N power inequality}
|x|^{l}|\phi(x)|&\lesssim  \left(|\cdot|^lH_c *\phi^2\right)(x)+\left(H_c*|\cdot|^l\phi^2\right)(x).
\end{align*}
In consideration of Theorems \ref{LPproperty of kernel} and \ref{algebraic decay 1} the assertion follows by Young's inequality. 
 \end{proof}

\subsection{Exponential decay of solitary solutions}

Relying on the arguments in \cite[Corollary 3.1.4]{bona1997decay}, we apply the exponential decay of $H_c$ to prove that for the steady Whitham equation any supercritical solution $\phi$  actually decays exponentially.

\begin{thm}[Exponential decay]\label{exponential decay 1}
Let $\delta_c>0$ denote the decay rate of $H_c$. If $\phi$ is a supercritical solution to the steady Whitham equation \eqref{convolution equation} satisfying $\phi\to 0$ as $|x|\to \infty$, then there exists $0<\nu<\delta_c$ such that
\[x\mapsto e^{\nu|x|}\phi(x)\quad \mbox{belongs to}\quad L_1(\R)\cap L_\infty(\R).\]
\end{thm}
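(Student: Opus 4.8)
The plan is to bootstrap from the algebraic decay already established in Corollary~\ref{algebraic decay} to genuine exponential decay, imitating the weighted-$L^q$ fixed-point argument of Theorem~\ref{algebraic decay 1} but with the algebraic weight $(1+|x|)^\alpha$ replaced by an exponential weight. Concretely, fix $0<\nu<\delta_c$ and, for a cutoff parameter $\e\in(0,1)$, introduce the truncated weight
\[
w_\e(x) := \frac{e^{\nu|x|}}{1+\e\,e^{\nu|x|}},
\]
which is bounded for each fixed $\e$ but converges monotonically to $e^{\nu|x|}$ as $\e\searrow 0$. I would then study $g_\e := w_\e\,\phi$ and aim to show that $\{g_\e\}_{\e\in(0,1)}$ is uniformly bounded in $L_q(\R)$ for a suitable $q$, so that dominated convergence yields $e^{\nu|\cdot|}\phi\in L_q(\R)$; a final Sobolev/interpolation step then upgrades this to $L_1\cap L_\infty$.

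First I would rewrite the equation, using Lemma~\ref{more precise bound of phi} so that $c-\phi$ is bounded below by a positive constant, in the form
\begin{equation*}
|\phi(x)| \lesssim \int_\R H_c(x-y)\,\phi^2(y)\,\dd y.
\end{equation*}
The key analytic input is Proposition~\ref{kernel L infinity decay}, which gives $H_c(x)\lesssim e^{-\delta|x|}$ for $|x|\geq 1$ together with the local singularity $H_c\eqsim|x|^{-1/2}$ from Lemma~\ref{Hm near 0}; choosing $\delta$ with $\nu<\delta<\delta_c$ ensures that $e^{\nu|\cdot|}H_c$ still lies in $L_p(\R)$ for $p\in[1,2)$. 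Inserting the weight into the convolution and applying the triangle inequality $e^{\nu|x|}\leq e^{\nu|x-y|}e^{\nu|y|}$, I would distribute $e^{\nu|x-y|}$ onto the kernel and $e^{\nu|y|}$ onto $\phi^2$. Exploiting the quadratic nonlinearity exactly as before—for every $\delta_0>0$ there is $R_{\delta_0}$ with $\phi^2\leq\delta_0\phi$ for $|y|\geq R_{\delta_0}$—I would split the spatial integral, and on the far region absorb a small multiple of $\|g_\e\|_{L_q}^q$ back into the left-hand side via H\"older, while on the bounded region $|y|<R_{\delta_0}$ the algebraic decay of $\phi$ from Corollary~\ref{algebraic decay} controls the contribution uniformly in $\e$.

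The main obstacle I anticipate is the interplay between the weight and the nonlinearity in the convolution: unlike the algebraic case, one cannot simply pull the exponential weight through, and the estimate $e^{\nu|x-y|}H_c(x-y)\in L_p$ only survives because of the strict inequality $\nu<\delta_c$ and the fact that the remaining quadratic factor carries one extra copy of $\phi$ that decays. Making the absorption rigorous requires that the coefficient of the self-referential term $\int_{|x|\geq R}|g_\e|^q$ be strictly less than one, which I would secure by choosing $\delta_0$ small \emph{before} sending $\e\to0$, so that the smallness is independent of the truncation. Once uniform boundedness of $g_\e$ in $L_q$ is in hand, Fatou or dominated convergence gives $e^{\nu|\cdot|}\phi\in L_q(\R)$; feeding this back once more into the convolution equation and using $e^{\nu|\cdot|}H_c\in L_1\cap L_2$ together with Young's inequality then promotes the conclusion to $e^{\nu|\cdot|}\phi\in L_1(\R)\cap L_\infty(\R)$, completing the proof.
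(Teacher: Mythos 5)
Your proposal is correct, but it takes a genuinely different route from the paper. The paper's proof follows Bona--Li: armed with $\phi\in L_1(\R)$ and $\|(\cdot)\phi(\cdot)\|_{L_\infty(\R)}<\infty$ from Corollary~\ref{algebraic decay}, it establishes by induction the moment bound $\|(\cdot)^l\phi(\cdot)\|_{L_1(\R)}\leq (l+2)!\,D^{l+1}\sigma^{-(l+1)}$ for all $l\in\N_0$ (using Lemma~\ref{convolution lemma}, Young's and H\"older's inequalities, and Lemma~\ref{Alemma}), and then sums the Taylor series of $e^{\nu|x|}$, which converges for $\nu<\sigma/D$. You instead rerun the truncated-weight absorption scheme of Theorem~\ref{algebraic decay 1}, with the algebraic weight replaced by $w_\e(x)=e^{\nu|x|}/(1+\e\,e^{\nu|x|})$. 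This works, and in fact more smoothly than in the algebraic case, because your weight is submultiplicative: since $t\mapsto t/(1+\e t)$ is increasing and $e^{\nu|x-y|}\geq 1$, one has $w_\e(x)\leq e^{\nu|x-y|}w_\e(y)$, so the weight distributes onto the kernel (where $e^{\nu|\cdot|}H_c\in L_1(\R)$ for $\nu<\delta_c$) and onto $\phi^2$; no analogue of Lemma~\ref{main inequality lemma} is needed, and plain Young's inequality replaces the H\"older gymnastics. The two further ingredients you correctly identify are the a priori finiteness of $\|w_\e\phi\|_{L_q(\R)}$ for each fixed $\e$ (this is what the truncation plus the already-proved algebraic decay buy, and it is what legitimizes absorbing the $\delta_0$-small far-field term) and the choice of $\delta_0$, hence $R_{\delta_0}$, \emph{before} letting $\e\to 0$. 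As to what each approach buys: the paper's induction produces only some rate $\nu<\sigma/D$, with $D$ depending on norms of $\phi$, and needs the separate rate-doubling bootstrap of Proposition~\ref{exponential decay} to reach rates up to $\delta_c$; your argument gives every rate $\nu<\delta_c$ at once, so it proves Theorem~\ref{exponential decay 1} and essentially Proposition~\ref{exponential decay} in one stroke.

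One slip to correct: at the end you claim $e^{\nu|\cdot|}H_c\in L_1(\R)\cap L_2(\R)$. This is false, since $H_c\eqsim |x|^{-1/2}$ near the origin, so $H_c\notin L_2(\R)$; by Corollary~\ref{LPproperty of kernel} the kernel lies in $L_p(\R)$ exactly for $p\in[1,2)$, with or without the exponential weight. The repair is immediate and consistent with what you yourself wrote earlier: for the $L_\infty$ bound apply Young's inequality with $e^{\nu|\cdot|}H_c\in L_p(\R)$, $p\in(1,2)$ the conjugate of your $q$, against $e^{\nu|\cdot|}\phi^2=(e^{\nu|\cdot|}\phi)\phi\in L_{q}(\R)$, which follows from your weighted $L_q$ bound and $\phi\in L_\infty(\R)$; for the $L_1$ bound pair $e^{\nu|\cdot|}H_c\in L_1(\R)$ with $e^{\nu|\cdot|}\phi^2\in L_1(\R)$. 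Alternatively, note that your absorption argument runs verbatim with $q=1$ and $q=\infty$ (the a priori finiteness for fixed $\e$ holds there too, by the boundedness and algebraic decay of $\phi$), so the final bootstrap step can be dispensed with entirely.
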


\begin{proof}
Corollary \ref{Kexp} warrants that for every $c>1$ there exists $\delta_c\in (0,\frac{\pi}{2})$ such that 
\[x\mapsto e^{\delta_c|x|}xH_c(x)\quad \mbox{belongs to}\quad L_2(\R).\]
Together with Corollary \ref{LPproperty of kernel} we conclude that for all $0<\sigma<\delta_c$
\begin{equation}\label{K regularity}
e^{\sigma |\cdot|}H_c(\cdot)\in L_p(\R)\quad \mbox{for}\; p\in [1,2).
\end{equation}
Choose $p\in [1,2)$, let $q\in \R$ be such that $\frac{1}{p}+\frac{1}{q}=1$, and set
\[D:= \max \left\{1, \frac{\sigma}{2}\|\phi\|_{L_1(\R)}, (c-M)^{-1}\sigma^{\frac{1}{p}}\left(\frac{2}{q}\right)^{\frac{1}{q}}\|e^{\sigma|\cdot|}H_c(\cdot)\|_{L_p(\R)} \|(\cdot)\phi(\cdot)\|_\infty \right\},\]
where $M:=\sup \phi <c$.
Note that $D\geq 1$ is finite, owing to $\phi$ being a bounded solution, Corollary \ref{algebraic decay} and \eqref{K regularity}.
The main ingredient for proving exponential decay of $\phi$ is the following estimate
\begin{equation}\label{main estimate}
\|(\cdot)^l\phi(\cdot)\|_{L_1(\R)}\leq   \frac{(l+2)!D^{l+1}}{\sigma^{l+1}}\quad \mbox{for }\; l\in \N_0\quad \mbox{and}\quad 0<\sigma<\delta_c.
\end{equation}
Claim \eqref{main estimate} is proved by induction. Clearly, the statement holds true for $l=0$. Assuming that the inequality \eqref{main estimate} is satisfied for all natural numbers less or equal some $l\in \N_0$ one observes that
\begin{align}\label{estimate n1}
\begin{split}
\|(\cdot)^{l+1}\phi(\cdot)\|_{L_1(\R)}&\leq (c-M)^{-1}\|(\cdot)^{l+1}(H_c*\phi^2)(\cdot)\|_{L_1(\R)}\\
&\leq (c-M)^{-1}\sum_{j=0}^{l+1}\binom{l+1}{j}\|(\cdot)^{l+1-j}H_c(\cdot)\|_{L_1(\R)}\|(\cdot)^{j}\phi^2(\cdot)\|_{L_1(\R)},
\end{split}
\end{align}
on account of Lemma \ref{convolution lemma} and Young's inequality.
Applying H\"older's inequality to $\|(\cdot)^{l+1-j}H_c(\cdot)\|_{L_1(\R)}$ yields
\begin{align*}
\|(\cdot)^{l+1-j}H_c\|_{L_1(\R)}&=\int_\R |x^{l+1-j}H_c(x)| \dd x\leq \int_\R |x^{l+1-j}e^{-\sigma|x|}||e^{\sigma|x|}H_c(x)|\dd x\\ 
&\leq \|e^{\sigma|\cdot|}H_c(\cdot)\|_{L_p(\R)} \left(\int_\R |x|^{q(l+1-j)}e^{-q\sigma|x|}\dd x\right)^{\frac{1}{q}}\\ &=\|e^{\sigma|\cdot|}H_c(\cdot)\|_{L_p(\R)} 2^{\frac{1}{q}} \left(\int_0^\infty x^{q(l+1-j)}e^{-q\sigma x}\dd x\right)^{\frac{1}{q}}.
\end{align*}
Due to Lemma \ref{Alemma} one arrives at
\begin{align*}
\begin{split}
\|(\cdot)^{l+1-j}H_c\|_{L_1(\R)}&\leq  \|e^{\sigma|\cdot|}H_c(\cdot)\|_{L_p(\R)} 2^{\frac{1}{q}}\left( \frac{\left[q(l+1-j)\right]!}{(q\sigma)^{q(l+1-j)+1}} \right)^{\frac{1}{q}}\\
&\leq \|e^{\sigma|\cdot|}H_c(\cdot)\|_{L_p(\R)} \left(\frac{2}{q}\right)^{\frac{1}{q}}\frac{(l+1-j)!}{\sigma^{l+1-j+\frac{1}{q}}}.
\end{split}
\end{align*}
The assumption that \eqref{main estimate} holds for all natural numbers less of equal $l\in \N_0$ allows to control the second factor of the last inequality in \eqref{estimate n1} by
\begin{align}\label{estimate n3}
\|(\cdot)^{j}\phi^2(\cdot)\|_{L_1(\R)}&\leq \|(\cdot)\phi(\cdot)\|_\infty\|(\cdot)^{j-1}\phi(\cdot)\|_{L_1(\R)} \leq \|(\cdot)\phi(\cdot)\|_\infty\frac{(j+1)!D^{j}}{\sigma^{j}}
\end{align}
for $1\leq j\leq l+1$.
The combination of \eqref{estimate n1}--\eqref{estimate n3} together with the definition of $D$ yields
\begin{align*}
&\|(\cdot)^{l+1}\phi(\cdot)\|_{L_1(\R)}\leq \frac{1}{c-M}\sum_{j=0}^{l+1}\binom{l+1}{j}\|(\cdot)^{l+1-j}H_c(\cdot)\|_{L_1(\R)}\|(\cdot)^{j}\phi^2(\cdot)\|_{L_1(\R)}\\
&\quad=(c-M)^{-1}\sum_{j=0}^{l+1} \|e^{\sigma|\cdot|}H_c(\cdot)\|_{L_p(\R)} \left(\frac{2}{q}\right)^{\frac{1}{q}} \|(\cdot)\phi(\cdot)\|_\infty\frac{(l+1)!(j+1)D^{j}}{\sigma^{l+1+\frac{1}{q}}}\\
&\quad\leq\sum_{j=0}^{l+1}  \frac{(l+1)!(j+1)D^{j+1}}{\sigma^{l+1+\frac{1}{q}+\frac{1}{p}}} \\
&\quad \leq \frac{(l+3)!D^{l+2}}{\sigma^{l+2}} ,
\end{align*}
which completes the inductive step. Eventually, \eqref{main estimate} implies that
\begin{align*}
\|e^{\nu |\cdot|}\phi\|_{L_1(\R)} &= \int_\R \left|\sum_{l=0}^\infty \frac{\nu^l x^l}{l!}\phi(x) \right|\dd x\leq \sum_{l=0}^\infty \frac{\nu^l }{l!} \|(\cdot)^l\phi(\cdot)\|_{L_1(\R)}\\
&\leq \frac{D}{\sigma}\sum_{l=0}^\infty (l+1)(l+2)\left(\frac{\nu D}{\sigma}\right)^l,
\end{align*}
which converges if and only if $|\nu| <\frac{\sigma}{D}$. The boundedness of $\|e^{\nu |\cdot|}\phi\|_{L_{\infty}(\R)}$ can be proved similarly  by replacing $\|(\cdot)^l\phi(\cdot)\|_{L_1(\R)}$ by $\|(\cdot)^l\phi(\cdot)\|_{L_{\infty}(\R)}$ in \eqref{main estimate} and modifying $D$ accordingly.  Summarizing, we have that
\[e^{\nu |\cdot|}\phi\in L_1(\R)\cap L_\infty(\R)\qquad \mbox{for any }\quad 0<\nu<\frac{\sigma}{D}.\]
By definition of $\sigma$ and $D$, one observes that $\nu < \delta_c$
\end{proof}

The result above can be improved. As a matter of fact, following the lines in \cite[Corrollary 3.1.4]{bona1997decay}, one can show that any supercritical solution of the steady Whitham equation, tending to zero at infinity, decays at least at the same rate as the associated kernel $H_c$ does. In interest of keeping the present paper self contained, we include the proof.

\begin{prop}\label{exponential decay}
Let  $\delta_c>0$ be the decay rate of $H_c$. If $\phi$ is a supercritical solution to the steady Whitham equation \eqref{convolution equation} satisfying $\phi\to 0$ as $|x|\to \infty$, then
\[e^{\eta |\cdot|}\phi\in L_1(\R)\cap L_\infty(\R)\qquad \mbox{for some }\quad \eta \geq \delta_c.\]
\end{prop}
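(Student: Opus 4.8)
The plan is to bootstrap from the already-established exponential integrability with a sub-optimal rate (Theorem~\ref{exponential decay 1}) to the sharp rate $\eta \geq \delta_c$ dictated by the kernel. Theorem~\ref{exponential decay 1} gives us some $0 < \nu < \delta_c$ with $e^{\nu|\cdot|}\phi \in L_1(\R) \cap L_\infty(\R)$; the goal is to push $\nu$ up to (at least) $\delta_c$. The natural mechanism is the convolution identity \eqref{convolution equation}, rewritten as
\[
\phi(x) = \frac{1}{c-\phi(x)}\,(H_c * \phi^2)(x),
\]
where by Lemma~\ref{more precise bound of phi} the prefactor $(c-\phi)^{-1}$ is bounded above by $(c-M)^{-1}$. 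The key structural observation is that the nonlinearity is \emph{quadratic}: if $\phi$ already decays like $e^{-\nu|x|}$, then $\phi^2$ decays like $e^{-2\nu|x|}$, which is strictly faster, and convolving against the kernel $H_c$ (which itself decays at rate $\delta_c$, by Proposition~\ref{kernel L infinity decay}) should transfer a decay rate of $\min\{2\nu, \delta_c\}$ to the left-hand side.

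First I would make this transfer rigorous. For a fixed target rate $\mu < \delta_c$, multiply \eqref{convolution equation} by $e^{\mu|x|}$ and estimate
\[
e^{\mu|x|}|\phi(x)| \leq (c-M)^{-1} \int_\R e^{\mu|x-y|} H_c(x-y)\, e^{\mu|y|}\phi^2(y)\, \dd y,
\]
using the elementary splitting $e^{\mu|x|} \leq e^{\mu|x-y|}e^{\mu|y|}$. Provided $\mu < \delta_c$, Corollary~\ref{LPproperty of kernel} together with the exponential decay of $H_c$ gives $e^{\mu|\cdot|}H_c \in L_1(\R)$, so Young's inequality reduces the problem to controlling $e^{\mu|\cdot|}\phi^2$. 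Writing $e^{\mu|y|}\phi^2(y) = \big(e^{\mu|y|}\phi(y)\big)\phi(y)$, one factor is exactly the quantity we wish to bound in, say, $L_1$ or $L_\infty$, while the remaining factor $\phi$ supplies an extra $e^{-\nu|y|}$ of genuine decay from the base rate $\nu$. This is the self-improving estimate: the quadratic nonlinearity lets the known decay of one factor be leveraged against the kernel.

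The cleanest way to close the argument is a continuity (or "opening the box") scheme in the rate parameter. Let
\[
S = \{\mu \in [0, \delta_c) : e^{\mu|\cdot|}\phi \in L_1(\R)\cap L_\infty(\R)\}.
\]
By Theorem~\ref{exponential decay 1}, $S$ is nonempty and contains an interval $[0,\nu)$. The estimate above shows $S$ is open in $[0,\delta_c)$ and that membership of $\mu$ follows once $\phi$ is known to decay at rate strictly greater than $\mu/2$ — so any $\mu < 2\nu$ is admissible, and iterating, $\min\{2^k\nu, \delta_c\}$ is reached after finitely many steps, giving every $\mu < \delta_c$. To obtain the endpoint $\eta \geq \delta_c$ rather than merely $\eta < \delta_c$, I would at the final stage use the now-established fact that $e^{\mu|\cdot|}\phi \in L_1$ for all $\mu < \delta_c$ to conclude $e^{\delta_c|\cdot|}\phi^2 \in L_1$ (since $\phi^2$ picks up the doubled, hence supercritical, rate), and then one more application of the convolution identity with the borderline kernel weight closes at $\eta = \delta_c$. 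This mirrors \cite[Corollary~3.1.4]{bona1997decay}.

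The main obstacle I anticipate is the endpoint $\mu = \delta_c$ itself: the bound $e^{\delta_c|\cdot|}H_c \in L_1$ is exactly the critical integrability where Proposition~\ref{kernel L infinity decay} gives $H_c \lesssim e^{-\delta|x|}$ only for $\delta < \delta_c$, so the convolution $e^{\delta_c|\cdot|}H_c * (\cdot)$ need not map $L_1 \to L_1$ at the sharp rate. The quadratic gain is what saves us — because $\phi^2$ decays like $e^{-2\delta_c|x|}$ near the desired rate, there is room to spare, and one absorbs the lost $\varepsilon$ from the kernel into the surplus decay of the nonlinearity. Making this trade precise (choosing how to distribute the exponential weight between kernel and nonlinearity so that both factors remain integrable at the endpoint) is the delicate accounting step, but it is exactly the mechanism that permits $\eta \geq \delta_c$ rather than only $\eta < \delta_c$.
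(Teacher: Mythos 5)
Your core argument is the same as the paper's: rewrite \eqref{convolution equation} as $\phi=(c-\phi)^{-1}H_c*\phi^2$, split the weight via $e^{\mu|x|}\leq e^{\mu|x-y|}e^{\mu|y|}$, use $e^{\mu|\cdot|}H_c\in L_1(\R)$ for $\mu<\delta_c$ together with Young's inequality, and exploit that the quadratic nonlinearity halves the rate required of $\phi$. The paper packages this as a contradiction argument with $\eta:=\sup\{\nu>0 : e^{\nu|\cdot|}\phi\in L_1(\R)\cap L_\infty(\R)\}$, choosing $\frac{\eta}{2}<\nu<\min\{\eta,\frac{\delta_c}{2}\}$ and showing $2\nu$ is admissible, whereas you run an explicit doubling iteration $\nu\mapsto\min\{2\nu,\delta_c\}$; these are the same mechanism, and up to this point your proof is fine: it yields $e^{\mu|\cdot|}\phi\in L_1(\R)\cap L_\infty(\R)$ for every $\mu<\delta_c$, i.e.\ that the supremum is at least $\delta_c$, which is in fact all that the paper's own proof establishes.

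The gap is in your endpoint step, and the specific mechanism you propose there cannot work. First, the endpoint does need an argument: membership at all rates $\mu<\delta_c$ does not imply membership at $\delta_c$ (for $f=e^{-\delta_c|\cdot|}$ one has $e^{\mu|\cdot|}f\in L_1(\R)$ for every $\mu<\delta_c$, yet $e^{\delta_c|\cdot|}f\equiv 1\notin L_1(\R)$), so your instinct to treat it separately is right. However, ``absorbing the lost $\varepsilon$ from the kernel into the surplus decay of the nonlinearity'' is impossible: any pointwise bound of the form $e^{\delta_c|x|}\leq e^{(\delta_c-\varepsilon)|x-y|}\,w(y)$ would force $w(y)\geq \sup_x e^{\delta_c|x|-(\delta_c-\varepsilon)|x-y|}=\infty$, so whatever exponential rate you want on the left-hand side must be carried \emph{in full} by the kernel factor; no redistribution onto $\phi^2$ can lower it. The correct repair is to observe that the kernel is integrable at the rate $\delta_c$ exactly: Lemma \ref{Kexp} is stated at the rate $\delta_c$ itself, so by Cauchy--Schwarz
\begin{equation*}
\int_{|x|\geq 1} e^{\delta_c|x|}H_c(x)\,\dd x \;\leq\; \Bigl(\int_{|x|\geq 1}\tfrac{\dd x}{x^2}\Bigr)^{\frac12}\,\bigl\|e^{\delta_c|\cdot|}(\cdot)H_c(\cdot)\bigr\|_{L_2(\R)}<\infty,
\end{equation*}
while near the origin $H_c\eqsim|x|^{-\frac12}$ by Lemma \ref{Hm near 0}; hence $e^{\delta_c|\cdot|}H_c\in L_1(\R)$. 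With that in hand, your final step — $\phi^2e^{\delta_c|\cdot|}=\bigl(\phi e^{\frac{\delta_c}{2}|\cdot|}\bigr)^2\in L_1(\R)\cap L_\infty(\R)$ plus Young's inequality — goes through verbatim and gives attainment at $\eta=\delta_c$, which is actually slightly more than the paper's proof records.
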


\begin{proof}
 Theorem \ref{exponential decay} ensures that there exists $0<\nu<\delta_c$ such that
\[e^{\nu |\cdot|}\phi\in L_1(\R)\cap L_\infty(\R).\] 
Moreover, recall from \eqref{K regularity} that 
\begin{equation}\label{K12 regularity}
e^{\nu |\cdot|}H_c \in L_1(\R) \quad \mbox{for any} \quad 0<\nu<\delta_c.
\end{equation}
Thanks to the  quadratic nonlinearity we can estimate
\begin{align}\label{2conv}
\begin{split}
\phi(x)e^{\nu |x|}&\leq  (c-M)^{-1}\int_\R H_c (x-y)e^{\nu|x-y|}|\phi^2(y)|e^{\nu |y|} \dd y \\
&=(c-M)^{-1} \int_\R H_c (x-y)e^{\nu|x-y|}\left(\phi(y)|e^{\frac{\nu}{2} |y|}\right)^2 \dd y \\
&= (c-M)^{-1}\left(H_ce^{\nu |\cdot|}*\left(\phi e^{\frac{\nu}{2} |\cdot|}\right)^2\right)(x),
\end{split}
\end{align}
where $M:=\sup \phi < c$.
Let $\eta:=\sup\{\nu>0\mid e^{\nu |\cdot|}\phi\in L_1(\R)\cap L_\infty(\R)\}$. The aim is to show that $\eta\geq \delta_c$. 
Assuming on the contrary that $\eta <\delta_c$, one can choose $\nu>0$ such that $\frac{\eta}{2}<\nu<\min\{\eta,\frac{\delta_c}{2}\}$. 
Considering \eqref{2conv} for $2\nu$ instead of $\nu$, Young's inequality, \eqref{2conv} and $\phi$ being bounded imply that
\begin{equation}\label{contradiction}
\begin{split}
\|\phi e^{2\nu |\cdot|}\|_{L_1(\R)}&\leq(c-M)^{-1} \| H_ce^{2\nu |\cdot|}*\phi^2 e^{\nu |\cdot|}\|_{L_1(\R)}<\infty
\end{split}
\end{equation}
and
\begin{equation}\label{contradiction2}
\begin{split}
\|\phi e^{2\nu |\cdot|}\|_{L_\infty(\R)}&\leq(c-M)^{-1}\| H_ce^{2\nu |\cdot|}*\phi^2 e^{\nu |\cdot|}\|_{L_\infty(\R)}<\infty,
\end{split}
\end{equation}
by the choice  of $\nu$ and \eqref{K12 regularity}. In view of $2\nu > \eta$, \eqref{contradiction} and \eqref{contradiction2} lead to a contradiction to the definition of $\eta$. Hence, $\eta\geq \delta_c$.
\end{proof}

\bigskip

\section{Symmetry of solitary waves}\label{TS}
The method of moving planes is employed to prove that any supercritical solution to the steady Whitham equation tending to zero at infinity is symmetric and has exactly one crest. 
Let us start by formulating a lemma, which is comparable to the strong maximum principle for elliptic equations. It is a modified version of the so called touching lemma in \cite[Lemma 4.3]{EW} and stated in a form appropriate to our purpose.
A solution $\phi$ to the steady Whitham equation \eqref{convolution equation} is called a \emph{supersolution} if 
\[\phi(c-\phi)\geq H_c*\phi^2\]
and a \emph{subsolution} if the inequality above is replaced by $\leq$.

\begin{lem}[Touching lemma on a half-space]\label{touching lemma} Let $\phi_1$ and $\phi_2$ be a super-- and a subsolution of the steady Whitham equation \eqref{convolution equation} on a subset  $[\lambda,\infty)\subset \R$, respectively, satisfying $\phi_1\geq \phi_2$ on  $[\lambda,\infty)$ and $\phi_1^2-\phi_2^2$ being odd with respect to $\lambda$, that is $(\phi_1^2-\phi_2^2)(x)=-(\phi_1^2-\phi_2^2)(2\lambda-x)$. Then either
\begin{itemize}
\item $\phi_1=\phi_2$ in $[\lambda,\infty)$, or
\item $\phi_1>\phi_2$ with $\phi_1+\phi_2<c$ in $(\lambda,\infty)$ .
\end{itemize}
\end{lem}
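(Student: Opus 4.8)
The plan is to turn the two defining inequalities into a single nonlocal comparison inequality and then run a strong-maximum-principle type argument, with the oddness hypothesis serving exactly to make the convolution collapse onto the half-line $[\lambda,\infty)$. First I would set $w := \phi_1 - \phi_2$ and $v := \phi_1^2 - \phi_2^2 = w(\phi_1+\phi_2)$ on $[\lambda,\infty)$; since in our setting the profiles satisfy $0 < \phi_i < c$ (Lemma~\ref{more precise bound of phi}), both $w \geq 0$ and $v \geq 0$ there. Subtracting the subsolution relation for $\phi_2$ from the supersolution relation for $\phi_1$ and using the algebraic identity $\phi_1(c-\phi_1) - \phi_2(c-\phi_2) = w(c-\phi_1-\phi_2)$, I obtain the pointwise estimate
\[
w(x)\big(c - \phi_1(x) - \phi_2(x)\big) \geq (H_c * v)(x), \qquad x \geq \lambda.
\]

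Next I would rewrite the convolution over $\R$ as an integral over $[\lambda,\infty)$. Writing $x^\ast := 2\lambda - x$, splitting $\int_\R = \int_\lambda^\infty + \int_{-\infty}^\lambda$, substituting $y \mapsto 2\lambda - y$ in the second piece, and using that $H_c$ is even together with the oddness $v(2\lambda - y) = -v(y)$, the convolution collapses to
\[
(H_c * v)(x) = \int_\lambda^\infty \big[H_c(x-y) - H_c(x^\ast - y)\big]\, v(y)\, \dd y.
\]
The crux is that this reflected kernel is nonnegative: for $x,y > \lambda$ the elementary inequality $|x-y| < |x^\ast - y| = (x-\lambda)+(y-\lambda)$ holds, so the strict monotonicity of $H_c$ on $(0,\infty)$ (a non-constant completely monotone function, cf.\ Proposition~\ref{prop:completely monotone Fourier}) gives $H_c(x-y) - H_c(x^\ast - y) > 0$. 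Together with $v \geq 0$ this yields $(H_c * v)(x) \geq 0$, with strictly positive contributions wherever $v$ does not vanish.

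With these two ingredients the dichotomy follows. Assuming $\phi_1 \not\equiv \phi_2$, i.e.\ $w \not\equiv 0$, I would first show $w > 0$ on $(\lambda,\infty)$: if $w(x_0) = 0$ for some $x_0 > \lambda$, the displayed estimate forces $(H_c*v)(x_0) \leq 0$ and hence $(H_c*v)(x_0) = 0$; since the integrand is nonnegative with strictly positive kernel on $(\lambda,\infty)$, this forces $v \equiv 0$ there, and because $\phi_1 + \phi_2 > 0$ it forces $w \equiv 0$, a contradiction. Knowing $w > 0$, the inequality $w(c-\phi_1-\phi_2) \geq (H_c*v) \geq 0$ gives $c - \phi_1 - \phi_2 \geq 0$ on $(\lambda,\infty)$; an identical touching argument (equality $\phi_1 + \phi_2 = c$ at an interior $x_0$ again forces $(H_c*v)(x_0)=0$, hence $v\equiv 0$ and $w \equiv 0$) rules out equality, yielding the strict bound $\phi_1 + \phi_2 < c$ on $(\lambda,\infty)$.

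I expect the main obstacle to be the rigorous treatment of the reflected kernel: justifying the collapse of the convolution onto the half-line (absolute integrability via $H_c \in L_1$ from Corollary~\ref{LPproperty of kernel} and boundedness of $v$), and securing the \emph{strict} sign $H_c(x-y) > H_c(x^\ast - y)$, which relies on strict rather than mere monotonicity of $H_c$. The remaining steps are a clean nonlocal analogue of the strong maximum principle, whose only delicate point is ensuring $\phi_1 + \phi_2 > 0$ so that the vanishing of $v$ transfers to the vanishing of $w$.
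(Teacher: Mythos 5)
Your proposal is correct and follows essentially the same route as the paper's proof: subtract the super-/subsolution inequalities to get $(\phi_1-\phi_2)(c-\phi_1-\phi_2)\geq H_c*(\phi_1^2-\phi_2^2)$, collapse the convolution onto $[\lambda,\infty)$ via the reflection $y\mapsto 2\lambda-y$ using the evenness of $H_c$ and the oddness of $\phi_1^2-\phi_2^2$, and use the monotone decay of $H_c$ to obtain positivity of the reflected kernel $H_c(x-y)-H_c(x+y-2\lambda)$, from which the dichotomy follows. Your added care about the strictness of the monotonicity of $H_c$ and about the positivity $\phi_1+\phi_2>0$ (needed to pass from $\phi_1^2\equiv\phi_2^2$ to $\phi_1\equiv\phi_2$) merely makes explicit two points the paper leaves implicit.
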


\begin{proof}The symmetry and monotonicity of $H_c$ allow to deduce that $H_c$ acts as a positive convolution operator on odd functions with respect to $\lambda$ on the half line $[\lambda,\infty)$. Let $f\geq 0$ on $[\lambda,\infty)$, $f(x)=-f(2\lambda-x)$ and $x\geq \lambda$. Then
\begin{align*}
H_c*f(x)&=\int_{\lambda}^{\infty}H_c(y)f(x-y) \dd y +\int_{-\infty}^\lambda H_c(x-y)f(y) \dd y \\
&=\int_{\lambda}^{\infty}H_c(x-y)f(y) \dd y +\int^{\infty}_\lambda H_c(x+y-2\lambda)f(2\lambda-y) \dd y\\
&= \int_{\lambda}^{\infty}(H_c(x-y)-H_c(x+y-2\lambda))f(y)\dd y,
\end{align*}
where the last equality holds thanks to $f$ being odd with respect to $\lambda$. In view of $H_c$ being symmetric and monotonically decreasing on $(0,\infty)$, we obtain that
\begin{equation*}
H_c*f(x)\geq 0 \qquad \mbox{for all }\quad x\geq \lambda.
\end{equation*}
In particular, $H_c*f>0$ or $f=0$ on $(\lambda,\infty)$. 
Assume that $\phi_1$ and $\phi_2$ are super-- and subsolutions to the steady Whitham equation, respectively, $\phi_1\geq \phi_2$ for all $x\geq \lambda$ and  $\phi_1^2-\phi_2^2$ is odd with respect to $\lambda$, that is $\phi_1^2-\phi_2^2$ plays the role of $f$ above. Then,
\begin{align*}
(c-(\phi_1+\phi_2))(\phi_1-\phi_2)\geq H_c*(\phi_1^2-\phi_2^2)>0
\end{align*}
for all $x>\lambda$ unless $\phi_1=\phi_2$ on $[\lambda,\infty)$.
\end{proof}

\begin{cor} Let $\phi$ be a solution to the steady Whitham equation \eqref{convolution equation} and $\phi_\lambda(\cdot):=\phi(2\lambda-\cdot)$ be its refection about some $\lambda\in \R$. If $\phi\geq \phi_\lambda$ on $[\lambda,\infty)$ and there exists a point $x>\lambda$ where $\phi$ and $\phi_\lambda$ touch, that is $\phi(x)=\phi_\lambda(x)$, then $\phi=\phi_\lambda$.
\end{cor}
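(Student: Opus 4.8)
The plan is to apply the Touching Lemma (Lemma~\ref{touching lemma}) directly, with the choice $\phi_1 := \phi$ and $\phi_2 := \phi_\lambda$ on the half-line $[\lambda,\infty)$. The first step is to verify that $\phi_\lambda$ is itself a solution of \eqref{convolution equation}, so that both $\phi$ and $\phi_\lambda$ qualify simultaneously as super- and as subsolutions. This is precisely where the evenness of the kernel $H_c$ enters: performing the change of variables $y \mapsto 2\lambda - y$ in the convolution and using $H_c(-\cdot)=H_c(\cdot)$ gives $(H_c*\phi_\lambda^2)(x) = (H_c*\phi^2)(2\lambda-x) = \phi(2\lambda-x)(c-\phi(2\lambda-x)) = \phi_\lambda(x)(c-\phi_\lambda(x))$, so $\phi_\lambda$ solves \eqref{convolution equation} as claimed.

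Next I would check the oddness hypothesis of the lemma. Since $\phi_\lambda(2\lambda-x)=\phi(x)$, a direct substitution yields $(\phi^2-\phi_\lambda^2)(2\lambda-x)=\phi_\lambda^2(x)-\phi^2(x)=-(\phi^2-\phi_\lambda^2)(x)$, so that $\phi_1^2-\phi_2^2=\phi^2-\phi_\lambda^2$ is odd with respect to $\lambda$, exactly as required. Together with the assumed ordering $\phi\geq\phi_\lambda$ on $[\lambda,\infty)$, all hypotheses of Lemma~\ref{touching lemma} are then in place.

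The Touching Lemma delivers the dichotomy: either $\phi=\phi_\lambda$ on $[\lambda,\infty)$, or $\phi>\phi_\lambda$ strictly throughout $(\lambda,\infty)$. The assumed existence of a touching point $x>\lambda$ at which $\phi(x)=\phi_\lambda(x)$ immediately excludes the strict branch, forcing $\phi=\phi_\lambda$ on $[\lambda,\infty)$. To finish I would extend this equality to all of $\R$: the relation $\phi(x)=\phi(2\lambda-x)$ for $x\geq\lambda$ is a reflection symmetry which automatically propagates to $x\leq\lambda$ (replace $x$ by $2\lambda-x\geq\lambda$), whence $\phi=\phi_\lambda$ on the whole line.

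Since the corollary is essentially a repackaging of the Touching Lemma for the special pair $(\phi,\phi_\lambda)$, I do not anticipate a serious obstacle. The only points demanding genuine care are the two bookkeeping verifications --- that $\phi_\lambda$ is again a solution (which hinges on the evenness of $H_c$) and that $\phi^2-\phi_\lambda^2$ is odd about $\lambda$ --- after which the touching point selects the correct alternative of the dichotomy.
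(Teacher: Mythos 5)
Your proposal is correct and follows essentially the same route as the paper's own proof: observe that $\phi_\lambda$ is again a solution by the evenness of $H_c$, note that $\phi^2-\phi_\lambda^2$ is odd about $\lambda$, and invoke Lemma~\ref{touching lemma}, with the touching point ruling out the strict-inequality alternative. The paper states these steps tersely; you have merely written out the verifications (and the extension of the identity from $[\lambda,\infty)$ to all of $\R$ by reflection) in full, which is fine.
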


\begin{proof}
Let $\phi$ be a solution of the steady Whitham equation, then so is $\phi_\lambda$ due to the symmetry of $H_c$. Noticing that $\phi^2-\phi_\lambda^2$ is odd with respect to $\lambda$, the assertion is an immediate consequence of Lemma \ref{touching lemma}.
\end{proof}

The method of moving planes is applied to confirm that any supercritical solution tending to zero at infinity of the steady Whitham equation \eqref{convolution equation}
has exactly one crest about which it is symmetric.
The proof is inspired by  \cite{CLO}, where the authors establish the symmetry of positive solutions belonging to $L_{\frac{n+\alpha}{n-\alpha}, \mathrm{loc}}(\R^n)$ of
\begin{align}
\label{CLO eq}
u=L*u^{\frac{n+\alpha}{n-\alpha}},
\end{align}
where $n$ is the space dimension, $\alpha\in(0,n)$ and $L(x):= |x|^{\alpha-n}$. If $\alpha =\frac{1}{3}$ and the space dimension $n=1$, equation \eqref{CLO eq} reads 
\[u=L*u^{2}\]
with $L(x)=|x|^{-\frac{2}{3}}$, which displays a structural similarity to \eqref{convolution equation}. 
Assuming that $\phi$ tends to zero at infinity, our proof is less intricate than in \cite{CLO}, where the authors do not assume any asymptotic behavior of the solution but apply the method of moving planes to a Kelvin-type transform instead.
Since the nonlocal operator $H_c$ of the steady Whitham equation corresponds to an \emph{inhomogeneous} kernel function a Kelvin-type transform is not appropriate in our case. It is worth mentioning that in \cite{CLL, CLLY} the authors generalize the result in \cite{CLO} and establish maximum principles for a class of nonlocal equations which originate from the fractional Laplace operators.

\medskip

In accordance to \cite{CLO}, we define the open sets
\[\Sigma_\lambda :=\{x\in \R \mid x> \lambda \}\quad \mbox{and}\quad \Sigma_\lambda ^-:=\{x\in \Sigma_\lambda  \mid \phi(x)<\phi_\lambda(x)\},\]
where $\phi_\lambda(\cdot):= \phi(2\lambda-\cdot)$ is the reflection of $\phi$ about the axis $x=\lambda$.

\begin{center}
\begin{figure}[h!]
\begin{tikzpicture}[scale=0.8]
\draw[->] (-6,0)--(6,0) node[right]{$x$};
		 \coordinate (dstart) at (-6,0.8);
    \coordinate (d1) at (-5,1);
    \coordinate (d3) at (0,3.5);
    \coordinate (d4) at (2.5,0.8);
    \coordinate (d5) at (4,1.5);
    \coordinate (dend) at (6,1);
    \draw [color=black] (dstart) to[out=0,in=180] (d1) to[out=0,in=180] (d3)to[out=0,in=180] (d4)  to[out=0,in=200] (d5) to[out=10,in=160] (dend);				
	\coordinate (start) at (-1.5,3);
	\coordinate (e1) at (2.5,1);
	\coordinate (e2) at (3.5,0.8);
	\coordinate (e3) at (6,0.6);
	\draw [dashed] (start) to[out=-30,in=180] (e1) to[out=0,in=180] (e2) to[out=0,in=180] (e3);							
	\draw [dashed] (-1.5,3) -- (-1.5,0) node[below]{$\lambda$};			
	\draw [->, bend angle=45, bend right]  (-2,1.5) to (-1,1.5);
	\draw[very thick] (1.8,0)--(3.2,0);					
	\end{tikzpicture} 
	\caption{Sketch of an arbitrary supercritical solution to \eqref{convolution equation} tending to zero at infinity. The dashed curve is the reflection of the wave around $\lambda$ and the bold line on the {$x$-axis} represents the set $\Sigma_{\lambda}^-$.}
\end{figure}
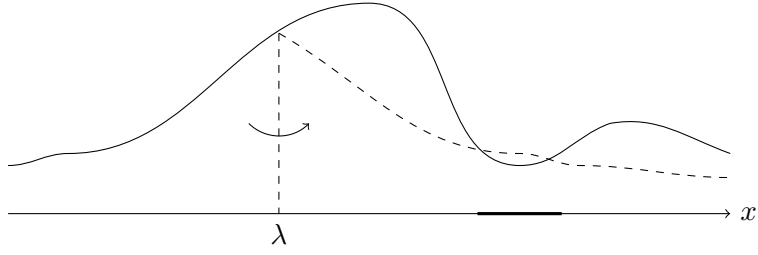
\end{center}

The aim is to prove that there exists $\lambda_0\in\R$ such that $\phi$ is symmetric about $x=\lambda_0$, that is $\phi(x)=\phi_\lambda(x)$ for all $x\in\R$, and moreover that $\phi$ has only one crest, which is then located at $x=\lambda_0$.
As a first step, it is shown that there exists $\lambda\in \R$ far enough to the left, so that the open set $\Sigma_\lambda^-$ is empty. An easy calculation analog to the one in the proof of Lemma \ref{touching lemma} shows that
\begin{align*}
\begin{split}
c(\phi(x)-\phi_\lambda(x))&=\int_{\Sigma_{\lambda}}\big(H_c(x-y)-H_c(2\lambda -x-y)\big)(\phi^{2
}(y)-\phi_\lambda^2(y))\dd y \\
& \quad+\phi^{2}(x)-\phi_\lambda^2(y).
\end{split}
\end{align*}
Let $x\in \Sigma_\lambda^-$. Then
\begin{align*}
0&<c(\phi_\lambda(x)-\phi(x))\\
&\leq \int_{\Sigma_{\lambda^-}}\big(H_c(x-y)-H_c(2\lambda -x-y)\big)(\phi_\lambda^2(y)-\phi^{2}(y))\dd y +\phi_\lambda^2(x)-\phi^{2}(x)\\
&\leq 2\int_{\Sigma_{\lambda^-}}H_c(x-y)\phi_\lambda(y)(\phi_\lambda(y)-\phi(y))\dd y +\phi_\lambda^2(x)-\phi^{2}(x)\\
&= 2\big(H_c*\phi_\lambda(\phi_\lambda-\phi)\big)(x) + \phi_\lambda^2(x)-\phi^{2}(x).
\end{align*} 
By Young's inequality we arrive at
\begin{equation}\label{lqnorm}
\left\|\phi_\lambda-\phi\right\|_{L_\infty({\Sigma^{-}_{\lambda}})}\leq \frac{2}{c}\|\phi\|_{L_{\infty}\left({\left(\Sigma_\lambda^-\right)^*}\right)}\left( \|H_c\|_{L_{1}({\R})}+1\right)\left\|\phi_\lambda-\phi\right\|_{L_{\infty}({\Sigma^{-}_{\lambda}})},
\end{equation}
where $\left(\Sigma_\lambda^-\right)^*$ is the reflection of $\Sigma_\lambda^-$ about the plane $x=\lambda$. Note that the right-hand side of \eqref{lqnorm} is bounded in view of Corollary \ref{LPproperty of kernel}.
Since $\phi$ tends to zero at infinity, there exists $N\in \N$ such that $\|\phi\|_{L_{\infty}{\left({\left(\Sigma_\lambda^-\right)^*}\right)}} < \frac{c}{2(\|H_c\|_{L_{1}({\R})} +1)}$ for all $\lambda \leq -N$.
Then, \eqref{lqnorm} implies that $\|\phi-\phi_\lambda\|_{L_\infty({\Sigma^{-}_{\lambda}})}=0$  for $\lambda\leq-N$. As a consequence $\Sigma^{-}_{\lambda}$ must be of measure zero. Since  $\Sigma^{-}_{\lambda}$ is open, we deduce that $\Sigma^{-}_{\lambda}$ is empty for $\lambda\leq-N$.

\begin{remark}\emph{
Relation \eqref{lqnorm} turns out to be crucial when applying the method of moving planes. Note that it states in particular that if $\|\phi\|_{L_{\infty}\left({\left(\Sigma_\lambda^-\right)^*}\right)}$ is sufficiently small (which depends either on the norm of $\phi$ on the fixed set $\left(\Sigma_\lambda^-\right)^*$ or on the size of $\left(\Sigma_\lambda^-\right)^*$), then $\left\|\phi-\phi_\lambda\right\|_{L_\infty({\Sigma^{-}_{\lambda}})}=0$.
}
\end{remark}

The following theorem is the main result, which proves the symmetry of solitary waves solutions to the Whitham equation.

\begin{thm}[Symmetry of solitary-wave solutions]
\label{Symmetry_of_traveling_waves}
Let $\phi$ be a supercritical solution to the steady Whitham equation tending to zero at infinity.
Starting at a point $\lambda=-N$, where $N>0$, such that  $\Sigma_\lambda^-$ is empty for all $\lambda \leq -N$, and moving the plane $x=\lambda$ to the right as long as 
\[\phi(x)\geq \phi_\lambda(x)\qquad \mbox{for all}\quad x\in \Sigma_\lambda,\]
this process stops only and finally at some point $x=\lambda_0$, where $\phi= \phi_{\lambda_0}$ on $\Sigma_{\lambda_0}$. In particular, $\phi$ is symmetric about $\lambda_0$ and (exponentially) decreasing on the half line $[\lambda_0, \infty)$.
\end{thm}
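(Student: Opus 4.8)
The plan is to run the method of moving planes and to show that the critical value
\[
\lambda_0 := \sup\{\lambda \in \R : \Sigma_\mu^- = \emptyset \text{ for all } \mu \le \lambda\}
\]
is the sought symmetry axis. This supremum is taken over a nonempty set, since the computation preceding the theorem shows $\Sigma_\mu^-=\emptyset$ for all $\mu\le -N$. I would first record that $\lambda_0<\infty$: were the plane movable arbitrarily far to the right, then $\phi\ge\phi_\lambda$ on $\Sigma_\lambda$ for every $\lambda$, and choosing $\lambda=\tfrac12(x_1+x_2)$ for arbitrary $x_1<x_2$ would force $\phi$ to be nondecreasing on all of $\R$; together with $\phi\ge 0$ (Lemma~\ref{more precise bound of phi}) and $\phi\to0$ at $+\infty$ this gives $\phi\equiv0$, contradicting nontriviality. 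By continuity of $\phi$, the inequality $\phi\ge\phi_\lambda$ on $\Sigma_\lambda$ survives the limit $\lambda\nearrow\lambda_0$, so $\Sigma_{\lambda_0}^-=\emptyset$, that is $\phi\ge\phi_{\lambda_0}$ on $\Sigma_{\lambda_0}$.

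Both $\phi$ and its reflection $\phi_{\lambda_0}$ solve \eqref{convolution equation}, so the touching lemma (Lemma~\ref{touching lemma}) yields the dichotomy that either $\phi\equiv\phi_{\lambda_0}$ on $[\lambda_0,\infty)$ — the desired symmetry — or $\phi>\phi_{\lambda_0}$ strictly on $(\lambda_0,\infty)$. The whole weight of the proof lies in excluding the strict alternative, which I would do by showing that it permits the plane to be advanced past $\lambda_0$, contradicting the definition of $\lambda_0$; concretely, I would produce $\varepsilon>0$ with $\Sigma_\lambda^-=\emptyset$ for all $\lambda\in[\lambda_0,\lambda_0+\varepsilon)$. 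The engine is estimate \eqref{lqnorm}, which forces $\|\phi-\phi_\lambda\|_{L_\infty(\Sigma_\lambda^-)}=0$ as soon as $\|\phi\|_{L_\infty((\Sigma_\lambda^-)^*)}$ lies below the threshold $\tfrac{c}{2(\|H_c\|_{L_1}+1)}$, which is positive because $H_c\in L_1(\R)$ (Corollary~\ref{LPproperty of kernel}). It therefore suffices to confine $(\Sigma_\lambda^-)^*$ to the region where $\phi$ is below this threshold. I would split $\Sigma_\lambda^-$ into a tail, where the exponential decay of $\phi$ (Theorem~\ref{exponential decay 1}) makes the relevant norm small, and a bounded part: on compact subsets of $(\lambda_0,\infty)$ the strict inequality $\phi>\phi_{\lambda_0}$ and continuity in $\lambda$ keep $\phi>\phi_\lambda$, so $\Sigma_\lambda^-$ avoids them.

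The main obstacle I anticipate is the narrow transition layer at the crest, near $x=\lambda_0$, where the gap $\phi-\phi_{\lambda_0}$ degenerates to zero while $\phi$ stays large: since $H_c\ge0$ gives $\|H_c\|_{L_1}=\mathcal F(H_c)(0)=\tfrac1{c-1}$, the threshold equals $\tfrac{c-1}{2}$, whereas evaluating \eqref{convolution equation} at the crest yields $\sup\phi\ge c-1>\tfrac{c-1}{2}$, so the crest always exceeds the threshold. Thus, in contrast to the homogeneous setting of \cite{CLO}, the local quadratic term $\phi_\lambda^2-\phi^2$ in \eqref{lqnorm} cannot be made negligible merely by shrinking the measure of $\Sigma_\lambda^-$, and \eqref{lqnorm} alone does not close near $\lambda_0$. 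To overcome this I would first deduce, from $\Sigma_\mu^-=\emptyset$ for every $\mu<\lambda_0$ together with the touching lemma — ruling out symmetry about two distinct axes, which would render the decaying $\phi$ periodic and hence trivial — that $\phi$ is \emph{strictly increasing} on $(-\infty,\lambda_0)$. The remaining and most delicate point is then to leverage this one-sided monotonicity, read just to the right of the moving plane, in order to exclude that a component of $\Sigma_\lambda^-$ reaches the axis $x=\lambda$ for $\lambda$ slightly beyond $\lambda_0$; combined with the compact-set and tail estimates this rules out $\Sigma_\lambda^-\neq\emptyset$ and forces $\phi\equiv\phi_{\lambda_0}$.

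Once $\phi\equiv\phi_{\lambda_0}$ on $\Sigma_{\lambda_0}$ is established, the identity $\phi(2\lambda_0-\cdot)=\phi$ holds on all of $\R$, so $\phi$ is symmetric about $\lambda_0$. Reflecting the strict monotonicity on $(-\infty,\lambda_0)$ through this axis shows $\phi$ is strictly decreasing on $(\lambda_0,\infty)$, so $\lambda_0$ is the unique crest; the exponential rate of decrease is then supplied verbatim by Theorem~\ref{exponential decay 1} and Proposition~\ref{exponential decay}.
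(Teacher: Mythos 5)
Your setup matches the paper's overall strategy (moving planes, the dichotomy from Lemma~\ref{touching lemma}, and estimate \eqref{lqnorm} as the engine), and your threshold computation is a genuinely sharp observation: since $H_c\geq 0$ gives $\|H_c\|_{L_1(\R)}=\mathcal{F}(H_c)(0)=\tfrac{1}{c-1}$, estimate \eqref{lqnorm} can only close where $\phi<\tfrac{c-1}{2}$ on the reflected set $(\Sigma_\lambda^-)^*$, while evaluating \eqref{convolution equation} at a point where $\phi$ attains its maximum $M$ yields $M(c-M)\leq \tfrac{M^2}{c-1}$, hence $\sup\phi\geq c-1$. For comparison, the paper's own proof handles the continuation step differently: it argues that $|\overline{\Sigma_{\lambda}^-}|$ is small for $\lambda\in[\lambda_0,\lambda_0+\delta)$ and deduces the threshold bound on $\|\phi\|_{L_\infty((\Sigma_\lambda^-)^*)}$ from that; your computation shows exactly why such a deduction is delicate, since an $L_\infty$ norm over a small-measure set near the crest need not be small. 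So you have correctly located where the whole difficulty of the theorem concentrates.

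The problem is that your proposal does not resolve that difficulty; it only announces a plan for it. In the strict alternative $\phi>\phi_{\lambda_0}$ on $(\lambda_0,\infty)$, you must show that for $\lambda$ slightly beyond $\lambda_0$ no component of $\Sigma_\lambda^-$ reaches the axis, i.e.\ that $\phi(x)\geq\phi(2\lambda-x)$ for $x\in(\lambda,\lambda_0+\eta)$, and the tool you invoke --- strict monotonicity of $\phi$ on $(-\infty,\lambda_0)$ --- cannot deliver this. If $2\lambda-x\leq\lambda_0$, left-monotonicity gives $\phi(2\lambda_0-x)<\phi(2\lambda-x)$, which points the wrong way relative to the only inequality you have, namely $\phi(x)>\phi(2\lambda_0-x)$; if instead $2\lambda-x>\lambda_0$, both points lie to the right of $\lambda_0$, where the strict alternative provides no monotonicity information whatsoever. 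Moreover, both gaps $\phi(x)-\phi(2\lambda_0-x)$ and $\phi(2\lambda-x)-\phi(2\lambda_0-x)$ degenerate to zero as $x\to\lambda$, so no soft continuity or compactness argument can compare them; some quantitative input (regularity of $\phi$ near $\lambda_0$, or a finer use of the nonlocal structure than \eqref{lqnorm}) is needed, and your sketch supplies none. Your compact-set and tail estimates are fine, and the concluding paragraph (symmetry, unique crest, decay via Theorem~\ref{exponential decay 1} and Proposition~\ref{exponential decay}) would follow once this layer is controlled --- but as submitted, the decisive step of the proof is missing.
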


\begin{proof}
Clearly, there can not be any crest at a point $x\leq-N$, since $\Sigma_\lambda^-$ is empty for all $\lambda<-N$.
The process of moving the plane $x=\lambda$ to the right will stop at or before it reaches a crest.
Assume that the moving plane stops at a point $x=\lambda_0 $, where $\phi(x)\geq \phi_{\lambda_0}(x)$, but $\phi(x)\neq \phi_{\lambda_0}(x)$ for all $x\in\Sigma_{\lambda_0}$, that is, $\phi$ is not symmetric about $x=\lambda_0$.
The touching lemma (Lemma \ref{touching lemma}) ensures that 
 $\phi(x)>\phi_{\lambda_0}(x)$ for all $x\in\Sigma_{\lambda_0}$ so that 
  $\overline{\Sigma_{\lambda_0}^-}$ has measure zero. By continuity of $\phi$, we have that for any $\e>0$ there exists $\delta>0$ such that $|\overline{\Sigma_\lambda^-}|<\e$ for all $\lambda\in [\lambda_0,\lambda_0+\delta)$.
It follows that there exists $\delta>0$ such that
\[\frac{1}{c}\|\phi\|_{L_{\infty}\left({\left(\Sigma_\lambda^-\right)^*}\right)}(\|H_c\|_{L_{1}({\R})}+1)<\frac{1}{4}\]
 for all $\lambda\in[\lambda_0,\lambda_0+\delta)$.
From \eqref{lqnorm} we deduce that $\|\phi-\phi_{\lambda}\|_{L_\infty({\Sigma^{-}_{\lambda}})}=0$. Therefore, $\Sigma_\lambda^-$ must be empty for all $\lambda \in[\lambda_0,\lambda_0+\delta)$ and the plane $x=\lambda_0$ can be moved further to the right, which is a contradiction. The assertion about exponential decay follows from Proposition \ref{exponential decay}.
\end{proof}

\begin{remark}\emph{In order to complete the picture of supercritical solitary solutions, we refer to a result in \cite{EW}, where the authors prove that any nonconstant even solution $\phi\in BUC^1(\R)$ of the steady Whitham equation which is nonincreasing on a half-line $(\lambda_0,\infty)$ satisfies 
\[\phi^\prime(x)<0\qquad\mbox{and}\qquad \phi(x)<\frac{c}{2}\qquad \mbox{for all}\quad x\in(\lambda_0,\infty).\]
}
\end{remark}

\bigskip

\section{Steadiness of symmetric waves}\label{ST}

We say a  solution $u$  is \emph{symmetric}, if there exists a function $\lambda\in C^1(\R_+)$ such that for every $t\geq0$ and $x\in\R$
\[u(t,x)= u(t,2\lambda(t)-x).\]
Then, $\lambda$ is called the axis of symmetry. In \cite{EHR} a local principle is established relating the property of a priori symmetry to steadiness. In particular it is proved that for a large class of local partial differential equations any classical, symmetric, unique solution constitutes a traveling wave. We follow the idea of the local principle and validate the analog result for classical, symmetric solutions to the nonlocal Whitham equation
\begin{equation}\label{WE}
u_t +2uu_x +K*u_x=0 .
\end{equation}
Recall that the kernel $K$ is given by 
\[ K(\xi)=\mathcal{F}^{-1} \bigg(\left(\frac{\tanh(\xi)}{\xi}\right)^{\frac{1}{2}}\bigg).\]
 
\begin{thm}[Symmetric solutions are traveling] 
Any classical, symmetric solution on $\R$ of the Whitham equation \eqref{WE}, which is unique with respect to initial data, is a traveling-wave solution. 
\end{thm}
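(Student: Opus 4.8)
The plan is to pass to the frame co-moving with the axis of symmetry and exploit the parity that the symmetry forces. Given a classical symmetric solution $u$ with axis $\lambda \in C^1(\R_+)$, I first set $v(t,x) := u(t, x + \lambda(t))$. The symmetry relation $u(t,y) = u(t, 2\lambda(t) - y)$ then gives $v(t,-x) = u(t, \lambda(t) - x) = u(t, \lambda(t) + x) = v(t,x)$, so that $v(t,\cdot)$ is \emph{even} for every fixed $t$. Since $u$ is classical and $\lambda$ is $C^1$, the map $v$ is $C^1$ in time, and a direct computation using $u_t = -2uu_x - K*u_x$ together with the translation invariance of the convolution (from $u(t,\cdot) = v(t, \cdot - \lambda(t))$ one obtains $(K*u_x)(t, x+\lambda(t)) = (K*v_x)(t,x)$) yields the transported equation
\[ v_t = \lambda'(t)\, v_x - 2 v v_x - K * v_x. \]

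Next I would read off a parity contradiction. Differentiating $v(t,x) = v(t,-x)$ in $t$ shows that $v_t(t,\cdot)$ is even. On the other hand, the evenness of $v(t,\cdot)$ makes $v_x$ odd, so each of the three terms $\lambda'(t) v_x$, $v v_x$ and $K*v_x$ on the right-hand side is odd; here the evenness of $K$ recorded in Section~\ref{preliminaries} ensures that the convolution of the even kernel with the odd function $v_x$ is odd. Thus $v_t$ is simultaneously even and odd, which forces $v_t \equiv 0$. In other words, the co-moving profile does not evolve, and $v = v(x)$ is steady.

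It then remains to identify the motion of the axis. With $v_t \equiv 0$, the transported equation reduces to $\lambda'(t)\, v_x(x) = 2 v(x) v_x(x) + (K*v_x)(x)$ for all $t$ and $x$. If $v$ is constant, then $u$ is constant and trivially traveling; otherwise there is a point $x_0$ with $v_x(x_0) \neq 0$, and dividing by $v_x(x_0)$ expresses $\lambda'(t)$ as a quantity independent of $t$, so $\lambda'(t) \equiv c$ for some constant $c$ and $\lambda(t) = \lambda(0) + ct$. Consequently $u(t,x) = v(x - ct - \lambda(0)) =: \phi(x - ct)$ is a traveling wave, and integrating the relation $c v_x = 2 v v_x + K*v_x$ shows that $\phi$ satisfies the steady Whitham equation \eqref{steadyWhitham} (with an admissible integration constant, normalizable by the Galilean change of variables).

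The hard part, I expect, will not be a deep obstruction but the rigorous bookkeeping in the co-moving frame under the available regularity. One must verify that $v_x$ is bounded and that $K \in L_1(\R)$ (both from Section~\ref{preliminaries}), so that $K*v_x$ is well defined and its oddness can be computed by the change of variables $y \mapsto -y$, and that differentiation in $t$ legitimately commutes with spatial reflection. The uniqueness hypothesis is the natural ingredient for the alternative reflection argument of \cite{EHR}, which uses the reversibility $u(t,x) \mapsto u(-t,-x)$ of \eqref{WE}; it is not essential for the parity route above, but it confirms that the symmetric solution together with its $C^1$ axis is unambiguously determined, and it provides a second, template-based derivation of the same conclusion.
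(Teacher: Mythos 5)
Your proposal is correct, and it reaches the conclusion by a genuinely different final step than the paper. The core computation is the same in substance: your transported equation $v_t = \lambda' v_x - 2vv_x - K*v_x$ in the co-moving frame, together with the parity bookkeeping, is exactly the paper's reflection identity — the paper differentiates the symmetry relation in the original frame, uses the evenness of $K$ to get $K*u_x(t,x) = -K*u_x(t,2\lambda(t)-x)$, and subtracts to obtain \eqref{Sub}, $u_x(2u-\dot\lambda) + K*u_x = 0$, which is precisely your odd part; your even/odd splitting additionally extracts $v_t \equiv 0$, which in the paper's frame is the combination of \eqref{WE} and \eqref{Sub}, namely $u_t = -\dot\lambda\, u_x$. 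The divergence is in how each argument closes. The paper freezes a time $t_0$, sets $c = \dot\lambda(t_0)$, checks via \eqref{Sub} that $\tilde u(t,x) = u(t_0, x - c(t-t_0))$ solves \eqref{WE}, and then invokes the \emph{uniqueness hypothesis} to conclude $u = \tilde u$; it never shows directly that $\dot\lambda$ is constant. You instead use $v_t \equiv 0$ to make the profile $t$-independent, and then solve $\lambda'(t) v_x(x_0) = 2v(x_0)v_x(x_0) + (K*v_x)(x_0)$ at a point where $v_x(x_0) \neq 0$ (the constant-profile case being trivial), so that $\lambda'$ is forced to be a constant $c$ and $u(t,x) = v(x - ct - \lambda(0))$ outright. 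This buys a formally stronger statement: for this scalar equation the uniqueness hypothesis is never used, so symmetry with a $C^1$ axis alone yields steadiness. What the paper's route buys is modularity: it is the template of \cite{EHR}, applicable verbatim in situations where one cannot (or does not wish to) isolate the speed from the leftover identity, at the price of assuming well-posedness. One caveat applies equally to both arguments and you correctly flag it: the manipulation of $K*u_x$ under reflection (or the oddness of $K*v_x$) needs $K \in L_1(\R)$ and boundedness of $u_x(t,\cdot)$ to justify the change of variables, which is implicit in the paper as well. Your closing remark attributing a time-reversibility argument to \cite{EHR} is not quite what either that paper or this one does — the uniqueness is used to compare with a constructed traveling wave forward in time, not via $u(t,x)\mapsto u(-t,-x)$ — but this aside does not affect the validity of your proof.
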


\begin{proof}
Let $u$ be a classical, symmetric solution to the Whitham equation, that is $u(t,x)= u(t,2\lambda(t)-x)$ for all $(t,x)\in [0,\infty)\times \R$. The symmetry property implies that
\begin{align*}
u_t (t,x)&= u_t(t,2\lambda(t)-x)+2\dot{\lambda}(t)u_x(t,2\lambda(t)-x),\\[5pt]
u_x(t,x)&=-u_x(t,2\lambda(t)-x).
\end{align*}
Thanks to the symmetry of $K$, an easy computation shows that
\[K*u_x(t,x)=-K*u_x(t,2\lambda(t)-x).\]
Hence, the solution $u$ satisfies
\[u_t(t,2\lambda(t)-x)+2\dot{\lambda}(t)u_x(t,2\lambda(t)-x)-2uu_x(t,2\lambda(t)-x) -K*u(t,2\lambda(t)-x)=0.\]
Since the above equality is valid for all $(t,x)\in [0,\infty)\times \R$, we deduce in particular that
\begin{equation}\label{Wa}
u_t(t,x)+2\dot{\lambda}(t)u_x(t,x)-2uu_x(t,x)- K*u_x(t,x)=0.
\end{equation}
Subtracting \eqref{Wa} from \eqref{WE} yields
\begin{equation}\label{Sub}
u_x(t,x)(2u(t,x)-\dot{\lambda}(t))+K*u_x(t,x)=0\qquad \mbox{for all}\quad (t,x)\in [0,\infty)\times \R.
\end{equation}
From here, we follow the lines in \cite{EHR}. Fix a time $t_0\geq0$ and set $c:= \dot{\lambda}(t_0)$. Defining the traveling wave
\[\tilde{u}(t,x):= u(t_0, x-c(t-t_0)),\]
one obtains that $\tilde{u}$ satisfies
\begin{align*}
&\tilde{u}_t(t,x)+2\tilde{u}\tilde{u}_x(t,x)+K*\tilde{u}(t,x) \\
&\;= -c{u}_x(t_0, x-c(t-t_0)) +2{u}{u}_x(t_0, x-c(t-t_0))+K*{u}(t_0, x-c(t-t_0))\\[5pt]
&\;= {u}_x(2{u}-c)(t_0, x-c(t-t_0))+K*u(t_0, x-c(t-t_0))=0,
\end{align*}
due to \eqref{Sub}. Hence, $\tilde{u}$ is a solution to the Whitham equation.
By construction it holds $\tilde{u}(t_0,x) = u(t_0,x)$
so that $\tilde{u}$ coincides with the symmetric solution $u$ at $t=t_0$. By uniqueness of solutions with respect to initial data, we conclude that $u$ is indeed a traveling-wave solution.
\end{proof}

\subsection*{Acknowledgments}
The authors express their gratitude to Jean--Claude Saut for proposing this topic and to Vera Mikyoung Hur for fruitful discussions, which helped to remove additional assumptions on the wave profile when applying the method of moving planes.
This work was supported by the projects "Nonlinear Water Waves" (Grant No. 231668) and  "Waves and Nonlinear Phenomena" (Grant No. 250070) of the Research Council of Norway. 

\appendix
\section{}

We collect some technical lemmata, which are used to prove the decay result for supercritical solitary-wave solutions of the Whitham equation.

\begin{lem}\label{main inequality lemma}
	Let $l$ and $m$ be two constants satisfying $0<l<m-1$. Then, there exists $B=B(l,m)>0$, such that for all $\e\in (0,1)$ the following inequality holds true:
	\[ \int_{\R}\frac{|y|^{l}}{(1+\e|y|)^{m}(1+|x-y|)^{m}}\dd y\leq \frac{B|x|^{l}}{(1+\e|x|)^{m}} \quad \mathrm{for ~all} ~ x\in \R, ~ |x|\geq 1,\]
\end{lem}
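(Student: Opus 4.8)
The plan is to estimate the integral
\[
I(x) := \int_{\R}\frac{|y|^{l}}{(1+\e|y|)^{m}(1+|x-y|)^{m}}\,\dd y
\]
by splitting the real line according to where $|y|$ is comparable to $|x|$ and where it is not. The guiding intuition is that the factor $(1+|x-y|)^{-m}$ concentrates the mass of the integrand near $y=x$, and since $m-1>l>0$ this weight is integrable with plenty of room to spare; the numerator $|y|^l$ and the weight $(1+\e|y|)^{-m}$ should therefore be controlled by their values at $y=x$ up to a constant. Without loss of generality I may take $x>0$ (the integrand depends on $x$ only through $|x|$ after the obvious change of variables, and the bound is stated in terms of $|x|$), and I fix $x\geq 1$.

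First I would partition $\R$ into the region $A=\{\,|y-x|\leq x/2\,\}$ (so that $y$ is comparable to $x$) and its complement. On $A$ one has $|y|\eqsim x$ and, crucially, $1+\e|y|\eqsim 1+\e x$ uniformly in $\e\in(0,1)$, because $y/x\in[1/2,3/2]$ forces $\e|y|\eqsim \e x$. Hence on $A$ the factor $|y|^{l}(1+\e|y|)^{-m}$ is bounded by $C\,x^{l}(1+\e x)^{-m}$, and pulling this out leaves $\int_A (1+|x-y|)^{-m}\,\dd y\leq \int_\R (1+|t|)^{-m}\,\dd t<\infty$ since $m>1$. This produces exactly the desired bound $B\,x^{l}(1+\e x)^{-m}$ on the region $A$.

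Next I would handle the complementary region $A^c=\{\,|y-x|> x/2\,\}$, which I split further into $\{|y|\leq x/2\}$ and $\{|y|\geq 3x/2\}$. On $\{|y|\leq x/2\}$ one has $|x-y|\eqsim x\geq 1$, so $(1+|x-y|)^{-m}\lesssim x^{-m}$; the remaining integral $\int_{|y|\leq x/2}|y|^{l}(1+\e|y|)^{-m}\,\dd y$ is dominated by $\int_{|y|\leq x/2}|y|^{l}\,\dd y\lesssim x^{l+1}$, and combining gives a contribution $\lesssim x^{l+1-m}$, which is $\lesssim x^{l}(1+\e x)^{-m}$ since $(1+\e x)^{-m}\gtrsim x^{-m}$ would be false for large $\e x$ — so here I instead compare against $x^{l}\cdot x^{1-m}$ and note $x^{1-m}\le 1\le B(1+\e x)^{-m}\cdot(\dots)$; I will want to verify the exponents carefully against both the $\e x\lesssim 1$ and $\e x\gtrsim 1$ regimes. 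The tail $\{|y|\geq 3x/2\}$ is the delicate piece: there $|x-y|\eqsim|y|$, so the integrand is $\lesssim |y|^{l}(1+\e|y|)^{-m}(1+|y|)^{-m}$, and since $2m-l>1$ this decays fast enough to integrate; I would bound it by splitting at $|y|\sim 1/\e$ to extract the correct power of $(1+\e x)$.

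The main obstacle will be bookkeeping of the $\e$-dependence so that the final bound is genuinely uniform in $\e\in(0,1)$ and carries the factor $(1+\e x)^{-m}$ rather than merely $(1+\e x)^{-1}$ or a constant. The cleanest way to guarantee uniformity is to treat the two regimes $\e x\leq 1$ and $\e x\geq 1$ separately when necessary: in the former $(1+\e x)^{-m}\eqsim 1$ and the estimate reduces to a pure power comparison using $l<m-1$, while in the latter $(1+\e x)^{-m}\eqsim (\e x)^{-m}$ and I must check that each regional contribution carries enough negative powers of $x$ together with the right powers of $\e$. Once both regimes yield $\lesssim x^{l}(1+\e x)^{-m}$, taking $B$ to be the maximum of the finitely many implied constants completes the proof.
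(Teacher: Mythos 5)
The paper itself does not prove this lemma; it defers to \cite[Lemma 3.1.1]{bona1997decay}, so there is no internal proof to compare against --- your attempt is measured against the standard Bona--Li argument, which, like yours, splits $\R$ according to the relative sizes of $|y|$, $|x|$ and $1/\e$. Your reduction to $x\geq 1$ is valid (the integral is even in $x$), and two of your three regions are handled correctly: on $\{|y-x|\leq x/2\}$ one indeed has $|y|\eqsim x$ and $1+\e|y|\eqsim 1+\e x$ uniformly in $\e$, leaving a convergent integral $\int_\R(1+|t|)^{-m}\,\dd t$ since $m>l+1>1$; and on the tail $\{|y|\geq 3x/2\}$ one has $|x-y|\eqsim|y|$, after which your splitting at $|y|\sim 1/\e$ produces the required powers.

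The genuine gap is in the region $\{|y|\leq x/2\}$. There you discard the weight and bound $\int_{|y|\leq x/2}|y|^l(1+\e|y|)^{-m}\,\dd y$ by $\int_{|y|\leq x/2}|y|^l\,\dd y\lesssim x^{l+1}$, giving a contribution $\lesssim x^{l+1-m}$. No bookkeeping can rescue this: the needed inequality $x^{l+1-m}\lesssim x^l(1+\e x)^{-m}$ is equivalent to $(1+\e x)^m\lesssim x^{m-1}$, which fails whenever $\e x$ is large (fix $\e$ near $1$ and let $x\to\infty$; the left side grows like $x^m$). Your proposed repair, ``$x^{1-m}\leq 1\leq B(1+\e x)^{-m}\cdot(\dots)$'', is not a valid inequality, since $(1+\e x)^{-m}\to 0$. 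The missing idea is that the factor $(1+\e|y|)^{-m}$ must be \emph{retained}, because it is exactly what supplies the compensating powers of $\e$: substituting $u=\e y$,
\begin{equation*}
\int_0^{x/2}\frac{y^l}{(1+\e y)^m}\,\dd y=\e^{-(l+1)}\int_0^{\e x/2}\frac{u^l}{(1+u)^m}\,\dd u\leq C(l,m)\,\e^{-(l+1)},
\end{equation*}
the last integral converging because $m-l>1$. Combined with $(1+|x-y|)^{-m}\lesssim x^{-m}$ on this region, the contribution is $\lesssim x^{-m}\min\bigl\{x^{l+1},\e^{-(l+1)}\bigr\}$. For $\e x\leq 1$ your crude bound suffices, as $x^{l+1-m}\leq x^l\eqsim x^l(1+\e x)^{-m}$; for $\e x\geq 1$ use the second bound: $x^{-m}\e^{-(l+1)}\lesssim x^l(\e x)^{-m}$ reduces to $\e^{m-l-1}\leq x^l$, which holds because $m-l-1>0$, $\e<1$ and $x\geq 1$. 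With this correction, and taking $B$ to be the maximum of the finitely many constants, your decomposition does yield a complete, self-contained proof of the lemma.
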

A proof of Lemma \ref{main inequality lemma} can be found in \cite[Lemma 3.1.1]{bona1997decay}.
The following two lemmata are results, which can be proved easily by induction and are applied in the proof of Theorem \ref{exponential decay 1}.
\begin{lem}\label{Alemma} Let $n\in \N$ and $q\geq 1$, then
\begin{equation*}
(qn)! \leq [q^{n}(n!)]^q.
\end{equation*}
\end{lem}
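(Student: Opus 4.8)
The quantity $(qn)!$ is to be read as $\Gamma(qn+1)$: in the intended application $q$ is a (generally non-integer) conjugate exponent, while $n=l+1-j$ is a nonnegative integer and the factor $n!$ on the right is the ordinary factorial. The case $n=0$ is an equality and needs nothing, so assume $n\ge 1$. My plan is to bypass a direct induction on $n$ — which stalls on a Gamma-ratio inequality, see below — and instead recast the claim as a single $L_q$-versus-$L_1$ comparison for one explicit weight.

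First I would rewrite the left-hand side through the integral representation $\Gamma(qn+1)=\int_0^\infty t^{qn}e^{-t}\dd t$ and the substitution $t=qs$, which yields
\[
\Gamma(qn+1)=q^{qn+1}\int_0^\infty \bigl(s^n e^{-s}\bigr)^q\,\dd s .
\]
Writing $g(s):=s^n e^{-s}\ge 0$ and noting $\int_0^\infty g\,\dd s=\Gamma(n+1)=n!$, the assertion $\Gamma(qn+1)\le (q^n n!)^q=q^{qn}(n!)^q$ becomes equivalent to the clean inequality
\[
q\int_0^\infty g(s)^q\,\dd s\le\Bigl(\int_0^\infty g(s)\,\dd s\Bigr)^q .
\]

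To prove this I would exploit $q\ge 1$ via the pointwise bound $g^q=g^{q-1}g\le g_{\max}^{\,q-1}\,g$, where $g_{\max}=\max_{s>0}g(s)=g(n)=n^n e^{-n}$. Integrating gives $\int_0^\infty g^q\,\dd s\le g_{\max}^{\,q-1}\,n!$, so it remains to verify $q\,g_{\max}^{\,q-1}n!\le (n!)^q$, i.e.
\[
q\le\Bigl(\tfrac{n!}{g_{\max}}\Bigr)^{q-1}=\Bigl(\tfrac{n!\,e^{n}}{n^{n}}\Bigr)^{q-1}.
\]
Here I would use the elementary facts that the sequence $b_n:=n!\,e^{n}/n^{n}$ is increasing (its ratio $b_{n+1}/b_n=e/(1+1/n)^n>1$) with $b_1=e$, so $b_n\ge e$ for $n\ge1$, together with $e^{q-1}\ge q$ for $q\ge1$. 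Chaining $q\le e^{q-1}\le b_n^{\,q-1}$ closes the estimate.

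The only genuine subtlety — and the reason I avoid the ``obvious'' induction on $n$ — is that passing from $n$ to $n+1$ forces one to bound the ratio $\Gamma(qn+q+1)/\Gamma(qn+1)$ by $(q(n+1))^q$, which for integer $q$ is the trivial product bound $s(s+1)\cdots(s+q-1)\le (s+q-1)^q$ but for real $q$ requires log-convexity of $\Gamma$ (a Gautschi/Wendel-type estimate) and is not ``easy''. The integral reformulation above sidesteps this entirely, using only $q\ge1$ and $e^{q-1}\ge q$. For the special case of integer $q$ one may instead argue in one line via the multinomial coefficient: $(qn)!/(n!)^q=\binom{qn}{n,\dots,n}$ is a single nonnegative summand in the multinomial expansion of $q^{qn}=(\underbrace{1+\cdots+1}_{q})^{qn}$, whence $(qn)!\le q^{qn}(n!)^q$.
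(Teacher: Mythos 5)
Your proof is correct, and it takes a genuinely different route from the paper, which in fact offers no argument at all: the lemma is only accompanied by the remark that it ``can be proved easily by induction.'' Your reading of $(qn)!$ as $\Gamma(qn+1)$ is the right one, since in the proof of Theorem \ref{exponential decay 1} the lemma is invoked with $q$ the H\"older conjugate of $p\in(1,2)$, hence a generally non-integer real number; for such $q$ the ``easy'' induction on $n$ reduces to bounding $\Gamma(qn+q+1)/\Gamma(qn+1)$ by $\bigl(q(n+1)\bigr)^q$, which is the trivial product estimate $x(x+1)\cdots(x+q-1)\le (x+q-1)^q$ only when $q\in\N$ and otherwise needs a Wendel/Gautschi-type Gamma-ratio bound, exactly as you say. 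Your substitution $t=qs$ in $\Gamma(qn+1)=\int_0^\infty t^{qn}e^{-t}\,\dd t$ correctly converts the claim into $q\int_0^\infty g(s)^q\,\dd s\le\bigl(\int_0^\infty g(s)\,\dd s\bigr)^q$ with $g(s)=s^ne^{-s}$, and the chain checks out in every detail: $g^q\le g_{\max}^{q-1}g$ with $g_{\max}=g(n)=n^ne^{-n}$ for $n\ge1$; the reduction to $q\le b_n^{\,q-1}$ with $b_n=n!\,e^n/n^n$; the monotonicity $b_{n+1}/b_n=e/(1+1/n)^n>1$ with $b_1=e$; and $e^{q-1}\ge q$ from $e^x\ge 1+x$. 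The case $n=0$ (which does occur in the application, as the summand $j=l+1$ in \eqref{estimate n1}) is correctly dispatched as an equality, and must be, since the $g_{\max}$ step fails there. As for what each approach buys: the paper's suggested induction is shortest if one is content with integer $q$ or willing to import log-convexity of $\Gamma$, and your multinomial one-liner $\binom{qn}{n,\dots,n}\le q^{qn}$ is the cleanest possible proof in the integer case; your integral argument, by contrast, is fully self-contained, uses nothing beyond $q\ge 1$ and $e^x\ge 1+x$, and covers precisely the real exponents the application requires --- so it repairs a subtlety the paper glosses over rather than merely reproving its lemma.
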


\begin{lem}\label{convolution lemma}
Let $f$ and $g$ be functions belonging to $L_1(\R)$, such that there exists $N\in \N$ with  $(\cdot)^nf$ and $(\cdot)^ng$ are bounded in $L_1(\R)$ for all $n\leq N$. Then
\begin{equation*}
(x)^n(f*g)(x)= \sum_{j=0}^n \binom{n}{j} ((\cdot)^{n-j}f*(\cdot)^jg)(x).
\end{equation*}
\end{lem}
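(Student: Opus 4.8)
The plan is to prove the identity directly from the integral definition of convolution, using the pointwise binomial expansion of the weight $x^n$. Writing $(f*g)(x)=\int_\R f(x-y)g(y)\,\dd y$, the key observation is that the monomial factor splits as $x^n=\bigl((x-y)+y\bigr)^n$, to which the binomial theorem applies under the integral sign. Everything then reduces to pulling the resulting finite sum out of the integral and recognizing each term as a convolution of weighted copies of $f$ and $g$.

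First I would record that all the moments appearing in the statement are genuinely integrable. Since $f,g\in L_1(\R)$ and $(\cdot)^N f,(\cdot)^N g\in L_1(\R)$ with $n\le N$, the elementary bound $|x|^k\le 1+|x|^N$ for $0\le k\le N$ yields $(\cdot)^k f,(\cdot)^k g\in L_1(\R)$ for every $0\le k\le n$. By Young's inequality each convolution $(\cdot)^{n-j}f*(\cdot)^j g$ is then a well-defined element of $L_1(\R)$, so the right-hand side of the asserted identity is meaningful.

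Next I would expand $x^n=\sum_{j=0}^n\binom{n}{j}(x-y)^{n-j}y^j$ inside the integral and interchange the finite sum with the integration:
\begin{align*}
x^n(f*g)(x)&=\int_\R x^n f(x-y)g(y)\,\dd y\\
&=\sum_{j=0}^n\binom{n}{j}\int_\R (x-y)^{n-j}f(x-y)\,y^j g(y)\,\dd y\\
&=\sum_{j=0}^n\binom{n}{j}\bigl((\cdot)^{n-j}f*(\cdot)^j g\bigr)(x).
\end{align*}
Each integrand is absolutely integrable by the moment bounds of the previous step, so the exchange of the finite sum and the integral is justified by linearity of the Lebesgue integral, with no limiting argument required. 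Recognizing the final integrals as convolutions of the weighted functions then completes the proof.

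The only point requiring genuine care — and the nearest thing to an obstacle — is the integrability bookkeeping: one must confirm that every factor $(\cdot)^{n-j}f$ and $(\cdot)^j g$ lies in $L_1(\R)$, so that the convolutions converge absolutely and the interchange of summation and integration is legitimate. This is immediate from the hypothesis by interpolation of moments, after which the identity is a purely algebraic consequence of the binomial theorem. An induction on $n$ (using the case $n=1$ and the product structure of the weight) would give the same conclusion, but the direct expansion above is cleaner and avoids repeating the integrability argument at each step.
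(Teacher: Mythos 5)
Your proof is correct. The paper itself offers no written argument for this lemma --- it merely remarks that it ``can be proved easily by induction'' --- so your direct derivation is the natural fleshing-out, and in substance the two routes coincide: the induction the authors have in mind would use $x^n = x\cdot x^{n-1}$ and Pascal's rule, which amounts to re-deriving the binomial identity $x^n=\sum_{j=0}^n\binom{n}{j}(x-y)^{n-j}y^j$ that you invoke outright. Your version is arguably cleaner, since it performs the integrability bookkeeping once (the interpolation bound $|x|^k\le 1+|x|^N$ giving $(\cdot)^k f,(\cdot)^k g\in L_1(\R)$ for $0\le k\le N$, hence absolute convergence of each term and a legitimate exchange of the finite sum with the integral) rather than at every inductive step. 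One small precision worth adding: a convolution of $L_1(\R)$ functions is defined only for almost every $x$, so the identity should be read as holding a.e.\ (or at every $x$ where all the integrals converge absolutely); this is harmless for the paper's application, where the lemma feeds into $L_1$-norm estimates via Young's inequality in the proof of Theorem \ref{exponential decay 1}.
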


\bibliographystyle{siam}
\bibliography{Bib_Whitham}

\end{document}